\documentclass[11pt]{amsart}
\usepackage{geometry}
\usepackage{palatino}
\usepackage[all]{xy}
\geometry{letterpaper}
\usepackage{amsthm,amssymb,amsthm,amsmath,graphicx,color}
\usepackage{mathrsfs,amscd}
\usepackage{multirow}
\newtheorem{theorem}{Theorem}[section]
\newtheorem{conjecture}[theorem]{Conjecture}

\newtheorem{algorithm}[theorem]{Algorithm}
\newtheorem{corollary}[theorem]{Corollary}
\newtheorem{lemma}[theorem]{Lemma}
\newtheorem{proposition}[theorem]{Proposition}
\newtheorem{example}[theorem]{Example}
\newtheorem{remark}[theorem]{Remark}

\newcommand{\bb}[1]{\mathbb{#1}}
\newcommand{\mc}[1]{\mathcal{#1}}
\newcommand{\mf}[1]{\mathfrak{#1}}

\begin{document}
\title[Quantization of Symplectic Spherical Nilpotent Orbits]{Regular Functions of Symplectic Spherical Nilpotent Orbits and their Quantizations}
\author{Kayue Daniel Wong}
\address[Wong]{Department of Mathematics, Hong Kong University of Science and
technology, Clear Water Bay, Kowloon, Hong Kong}
\email{makywong@ust.hk}
\begin{abstract}
We study the ring of regular functions of classical spherical orbits $R(\mc{O})$ for $G = Sp(2n,\bb{C})$. In particular, treating $G$ as a real Lie group with maximal compact subgroup $K$, we focus on a quantization model of $\mc{O}$ when $\mc{O}$ is the nilpotent orbit $(2^{2p}1^{2q})$. With this model, we verify a conjecture by McGovern and another conjecture by Achar and Sommers related to the character formula of such orbits. Assuming the results in \cite{B 2008}, we will also verify the Achar-Sommers conjecture for a larger class of nilpotent orbits.
\end{abstract}
\maketitle

\section{Introduction}
Let $G$ be a complex simple Lie group. An element $X$ in the Lie algebra $\mf{g}$ is called \textit{nilpotent} if $ad(X)^n = 0$ for some large $n \in \mathbb{N}$. Nilpotent elements of the same conjugacy class form a nilpotent orbit. Motivated by Kirillov's Orbit Method, one would like to `attach' a unitary representation on every nilpotent orbit $\mc{O}$. More precisely, consider $G$ as a real Lie group with maximal compact subgroup $K$, one would like to find a (hopefully unitarizable) $(\mf{g}_{\bb{C}}, K_{\bb{C}})$ module $X_{\mc{O}}^+$ such that
\begin{align}
X_{\mc{O}}^+|_{K_{\bb{C}}} \cong R(\mc{O})
\end{align}
\noindent as $K_{\bb{C}} \cong G$-modules. We call this a \textit{quantization} of $\mc{O}$ (Our definition is analogous to a quantization scheme for some real nilpotent orbits using \textit{admissible data} in \cite{Yang}).\\

Let $G = Sp(2n, \bb{C})$. Then every nilpotent orbit can be parametrized by a partition $\lambda_1 \geq \lambda_2 \geq \dots \geq \lambda_k$ of $2n$, subject to the condition that $|\{ i | \lambda_i = 2r+1 \}|$ is even for all $r \in \bb{N}$ (see \cite{CM}). In this manuscript, we would like to find a quantization model of nilpotent orbits $\mc{O} = (2^{2p}1^{2q})$. It turns out the theory of \textit{special unipotent representations} will do the job. Indeed, Barbasch gave such quantization model for a much bigger class of classical nilpotent orbits in a preprint \cite{B 2008}. We will not use any tools there, but all results appearing below verify the results there. In particular, we have\\

\noindent \textbf{Theorem A} (Corollary \ref{cor:rounipotent}(a)). \textit{
Let $G = Sp(2n,\bb{C})$ and $\mc{O} = (2^{2p}1^{2q})$ be a nilpotent orbit. Then there are two special unipotent representations $X_{\mc{O}}^+$, $X_{\mc{O}}^-$ attached to $\mc{O}$, then as $G \cong K_{\bb{C}}$ modules,
$$X_{\mc{O}}^+ \cong R(\mc{O}).$$
Furthermore, writing $\tilde{\mc{O}}$ as the universal cover of $\mc{O}$. Then
$$X_{\mc{O}}^+ \oplus X_{\mc{O}}^- \cong R(\tilde{\mc{O}}).$$}

It turns out that the quantization model in Theorem A has a few applications:\\

\noindent \textbf{(I)\ A Character Formula of $R(\mc{O})$.\ }  McGovern in \cite{MG1} gave a description of $R(\mc{O})$ as follows:
\begin{theorem}[\cite{MG1} Theorem 3.1, Corollary 3.2] \label{thm:mc}
Let $\mc{O}$ be a complex nilpotent orbit in $\mf{g}$, with a choice of Jacobson-Morozov triple $\{e,f,h\}$. Write $\mf{g}_i$ as the $i$-eigenspace of $ad(h)$ on $\mf{g}$, and $\mf{q} = \sum_{i \geq 0} \mf{g}_i$. Then as $G$-modules,
$$R(\mc{O}) \cong Ind_T^G(\prod_{\alpha \in \Delta^+, \mf{g}_{\alpha} \subset \mf{g}_0 + \mf{g}_1} (1 - e^{\alpha})),$$
where $\Delta^+$ is the set of all positive roots of $\mf{g}$.
\end{theorem}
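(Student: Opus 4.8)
\noindent\textit{Proof plan.} The plan is to realize $R(\mc{O})$ as the ring of global functions of a generalized Springer resolution of $\overline{\mc{O}}$ and then to evaluate that ring with the Borel--Weil--Bott theorem. First I would reduce to the closure: $\overline{\mc{O}}\setminus\mc{O}$ is a union of nilpotent orbits of strictly smaller dimension, hence of codimension $\geq 2$, and since $\overline{\mc{O}}$ is normal (for classical $\mf{g}$ this is the Kraft--Procesi criterion; cf.\ also Hinich and Panyushev) every regular function on $\mc{O}$ extends, so $R(\mc{O})=R(\overline{\mc{O}})$ as $G$-modules. Next, let $Q$ be the Jacobson--Morozov parabolic subgroup with Lie algebra $\mf{q}$, and put $\mf{g}_{\geq 2}=\sum_{i\geq 2}\mf{g}_i$, an ideal of $\mf{q}$ inside its nilradical consisting of the root spaces $\mf{g}_\alpha$ with $\langle\alpha,h\rangle\geq 2$ (all such $\alpha$ are positive). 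Since $\dim\mf{g}^e=\dim\mf{g}_0+\dim\mf{g}_1$, a dimension count gives $\dim Q-\dim G^e=\dim\mf{g}_{\geq 2}$, so $Q\cdot e$ is the dense $Q$-orbit on $\mf{g}_{\geq 2}$; together with the standard fact $\mc{O}\cap\mf{g}_{\geq 2}=Q\cdot e$, this shows the moment map
\[
\pi\colon \widetilde{X}:=G\times_Q\mf{g}_{\geq 2}\longrightarrow\overline{\mc{O}},\qquad (g,x)\mapsto \mathrm{Ad}(g)x,
\]
is a proper (it is closed in $G/Q\times\mf{g}$) birational morphism from the smooth variety $\widetilde{X}$, i.e.\ a resolution of singularities of $\overline{\mc{O}}$.

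I would then invoke the rational singularities of nilpotent orbit closures in classical $\mf{g}$ (Hinich, Panyushev; Broer): $R^{>0}\pi_*\mc{O}_{\widetilde{X}}=0$ and $\pi_*\mc{O}_{\widetilde{X}}=\mc{O}_{\overline{\mc{O}}}$. As $\overline{\mc{O}}$ is affine, this identifies $R(\overline{\mc{O}})$ with the $G$-equivariant Euler characteristic $\sum_i(-1)^iH^i(\widetilde{X},\mc{O}_{\widetilde{X}})$; pushing $\widetilde{X}$ down to $G/Q$, whose fibres are affine spaces, it equals $\sum_i(-1)^i H^i\bigl(G/Q,\mathrm{Sym}^\bullet(\mc{V}^{\vee})\bigr)$ for the tautological bundle $\mc{V}=G\times_Q\mf{g}_{\geq 2}$. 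Pulling back along $G/B\to G/Q$ leaves the Euler characteristic unchanged, and Bott's theorem applied degree by degree to the symmetric algebra of the $B$-module $\mf{g}_{\geq 2}^{*}$ then expresses the answer as $Ind_T^G$ of the formal character $\prod_{\alpha\in\Delta^+,\,\mf{g}_\alpha\subset\mf{g}_{\geq 2}}(1-e^{-\alpha})^{-1}$; equivalently, $R(\overline{\mc O})\cong Ind_Q^G\bigl(\mathrm{Sym}(\mf{g}_{\geq 2}^{*})\bigr)$.

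The last step --- and the one I expect to be the real obstacle --- is the purely combinatorial reconciliation of this with the stated shape, namely the identity
\[
Ind_T^G\Bigl(\prod_{\alpha\in\Delta^+,\,\mf{g}_\alpha\subset\mf{g}_{\geq 2}}\tfrac{1}{1-e^{-\alpha}}\Bigr)=Ind_T^G\Bigl(\prod_{\alpha\in\Delta^+,\,\mf{g}_\alpha\subset\mf{g}_0+\mf{g}_1}(1-e^{\alpha})\Bigr).
\]
The idea is to exploit that $\Delta^+$ is the disjoint union of $\{\alpha:\mf{g}_\alpha\subset\mf{g}_0+\mf{g}_1\}$ (where $\langle\alpha,h\rangle\in\{0,1\}$) and $\{\alpha:\mf{g}_\alpha\subset\mf{g}_{\geq 2}\}$, so that the Weyl denominator $\prod_{\alpha>0}(1-e^{-\alpha})$ implicit in $Ind_T^G$ splits along these two sets; absorbing the $\mf{g}_{\geq 2}$-part into that denominator and using the symmetry $e^{-\alpha}\leftrightarrow e^{\alpha}$ coming from the longest Weyl group element $w_0$, one checks the two virtual $G$-modules agree. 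Concretely this is done by comparing multiplicities of each $V_\lambda$ via Frobenius reciprocity together with Bott's algorithm. What makes it delicate is precisely that the resolution produces an \emph{infinite} product over the roots in $\mf{g}_{\geq 2}$ while McGovern's formula repackages the same module as a \emph{finite} product over the complementary roots in $\mf{g}_0+\mf{g}_1$, so the matching has to be carried out at the level of the induction functor, not of its argument; signs and $\rho$-shifts must be tracked carefully. Besides this, the one other genuinely substantive input is the rational-singularities statement, which for arbitrary $\mf{g}$ is a classification-dependent fact and is what pins down the generality in which the theorem holds.
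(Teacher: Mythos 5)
You should first know that the paper does not prove this statement at all: it is quoted verbatim from McGovern \cite{MG1} (Theorem 3.1, Corollary 3.2), so there is no in-paper argument to compare with, and your proposal has to stand on its own. As written it has two genuine errors. First, the opening reduction ``$R(\mc{O})=R(\overline{\mc{O}})$ since $\overline{\mc{O}}$ is normal (Kraft--Procesi)'' is wrong: Kraft--Procesi give a \emph{criterion} precisely because many classical nilpotent orbit closures are not normal, and the theorem is stated (and holds) for arbitrary complex $\mf{g}$, where non-normal closures also occur; moreover a non-normal $\overline{\mc{O}}$ cannot ``have rational singularities,'' and the Hinich--Panyushev theorem is about the \emph{normalization} and is general, not classification-dependent as your closing remark asserts. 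The repair is standard: one always has $R(\mc{O})=R(\widetilde{\overline{\mc{O}}})$ for the normalization (the boundary has codimension $\geq 2$), $\pi_*\mc{O}_{G\times_Q\mf{g}_{\geq 2}}$ is the normalization sheaf by Zariski's main theorem, and Hinich--Panyushev then gives the vanishing you need, so that $R(\mc{O})$ is the Euler characteristic $\chi\bigl(G/Q,\mathrm{Sym}(\mf{g}_{\geq 2}^{*})\bigr)$. This part is fixable, but as stated it both uses a false fact and needlessly restricts the generality of the theorem.

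The more serious problem is the last step, which you yourself flag as the main obstacle: the identity $Ind_T^G\bigl(\prod_{\mf{g}_\alpha\subset\mf{g}_{\geq 2}}(1-e^{-\alpha})^{-1}\bigr)=Ind_T^G\bigl(\prod_{\mf{g}_\alpha\subset\mf{g}_0+\mf{g}_1}(1-e^{\alpha})\bigr)$ is not a delicate identity to be verified --- it is false, because induction from $T$ and induction from $Q$ (or $B$) are different functors, so the Euler characteristic is \emph{not} $Ind_T^G$ of the fibre character. Concretely, for the regular orbit of $\mf{sl}_2$ the right-hand side is $Ind_T^G(e^0)$, which contains the adjoint representation once, while the left-hand side contains it twice. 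The correct mechanism, which your phrase ``the Weyl denominator implicit in $Ind_T^G$'' gestures at but does not implement, is the Grothendieck-group identity $\chi\bigl(G/B,\,W\bigr)=Ind_T^G\bigl(\mathrm{ch}(W)\cdot\prod_{\alpha\in\Delta^+}(1-e^{-\alpha})\bigr)$ for a $B$-module $W$ (this is Kostant's weight multiplicity formula in disguise); taking $W=\mathrm{Sym}(\mf{g}_{\geq 2}^{*})$, the infinite product cancels against the matching factors of the finite product over $\Delta^+$ and one lands \emph{directly} on $Ind_T^G\bigl(\prod_{\alpha\in\Delta^+,\ \mf{g}_\alpha\subset\mf{g}_0+\mf{g}_1}(1-e^{-\alpha})\bigr)$, with no separate reconciliation left to do (the discrepancy $e^{\alpha}$ versus $e^{-\alpha}$ is harmless for $Sp(2n,\bb{C})$, where $-1\in W$, and is a duality twist in general). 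So the geometric skeleton (Jacobson--Morozov resolution, normalization, cohomology vanishing, Euler characteristic) can be made to work and is close in spirit to McGovern's computation of $Ind_{G^e}^G(\mathrm{triv})$ through the parabolic $Q$, but both the normality step and the final character-theoretic step must be replaced as indicated, since each, as written, asserts something untrue.
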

From now on, we will write $Ind_T^G(\lambda)$ instead of $Ind_T^G(e^{\lambda})$. Motivated by the study of Dixmier algebras, McGovern further conjectured the following:
\begin{conjecture}[\cite{MG1} Conjecture 5.1] \label{conj:mc}
For each nilpotent orbit $\mc{O}$, the $G$-structure of $R(\mc{O})$ can be expressed by
$$R(\mc{O}) \cong \sum_{w \in W_{\mc{O}}} c_w Ind_T^G(\mu - w\cdot \mu)$$
for a fixed character $\mu$, $c_w \in \mathbb{Q}$ and $W_{\mc{O}}$ is a subset of $W_G$, the Weyl group of $G$.
\end{conjecture}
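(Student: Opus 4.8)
The plan is to obtain the conjecture, for the orbits $\mc{O} = (2^{2p}1^{2q})$ treated in this paper, as a formal consequence of Theorem A together with the structure theory of Harish-Chandra modules for the complex group $G$. Viewed as a real group, $G$ has complexified Lie algebra $\mf{g}\times\mf{g}$ and complexified maximal compact subgroup $K_{\bb{C}}\cong G$, so a $(\mf{g}_{\bb{C}},K_{\bb{C}})$-module is just a Harish-Chandra module for complex $G$. The minimal principal series $X(\lambda,\nu)$, induced from the character $e^{\lambda}\otimes e^{\nu}$ of a Borel, restricts to $K$ by Frobenius reciprocity as $Ind_T^G(\lambda-\nu)$. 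Consequently, any irreducible Harish-Chandra module of $G$ at an integral infinitesimal character $\gamma$, once written in the Grothendieck group as an integer combination of standard modules $\sum_{w} c_w\, X(\gamma,w\gamma)$ via the Kazhdan--Lusztig--Vogan algorithm, automatically has $K$-structure $\sum_{w} c_w\, Ind_T^G(\gamma - w\gamma)$ with $c_w\in\bb{Z}$.

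Applying this, I would argue as follows. By Theorem A (Corollary \ref{cor:rounipotent}), $R(\mc{O})\cong X_{\mc{O}}^+$ as $G\cong K_{\bb{C}}$-modules, where $X_{\mc{O}}^+$ is the special unipotent representation attached to $\mc{O}$, with infinitesimal character $\gamma=\tfrac12 h^{\vee}$ for a Jacobson--Morozov triple of the dual orbit $\mc{O}^{\vee}$; for the orbits $(2^{2p}1^{2q})$ this $\gamma$ is integral, so the reasoning above applies verbatim. Expanding $X_{\mc{O}}^+$ in the Grothendieck group at infinitesimal character $\gamma$ and restricting to $K$ gives $R(\mc{O})\cong X_{\mc{O}}^+|_K = \sum_{w} c_w\, Ind_T^G(\gamma - w\gamma)$ with $c_w\in\bb{Z}$. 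Finally, choosing $\mu$ with $\mu+\rho=\gamma$, one has $\gamma - w\gamma = (\mu+\rho)-w(\mu+\rho) = \mu - w\cdot\mu$, so $R(\mc{O})\cong \sum_{w\in W_{\mc{O}}} c_w\, Ind_T^G(\mu - w\cdot\mu)$ with $W_{\mc{O}}=\{w: c_w\neq 0\}$, which is exactly the shape predicted by Conjecture \ref{conj:mc}.

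The main obstacle is not this last formal manipulation but Theorem A itself: establishing $R(\mc{O})\cong X_{\mc{O}}^+$ requires an explicit computation of the $K$-type multiplicities of the special unipotent representation $X_{\mc{O}}^+$ and a matching of them against McGovern's formula in Theorem \ref{thm:mc}, which is the bulk of the combinatorial work and is where the $\theta$-stable / cohomologically induced model of $\mc{O}$ enters. A secondary subtlety is confirming that $\gamma$ is integral (equivalently, that each $\gamma-w\gamma$ is a genuine character of $T$) for all orbits in the family $(2^{2p}1^{2q})$. For a general nilpotent orbit the argument stalls at the very first step, since it is a priori unclear that $R(\mc{O})$ underlies a Harish-Chandra module with a single infinitesimal character --- this is essentially what Conjecture \ref{conj:mc} asserts --- which is precisely why the case of the orbits in \cite{B 2008} can only be obtained conditionally on the quantization constructed there restricting as expected.
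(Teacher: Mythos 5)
Your argument is correct, and it follows the same overall strategy as the paper: reduce Conjecture \ref{conj:mc} to Theorem A and then use the fact that a principal series $X(\lambda_1,\lambda_2)$ restricts to $K_{\bb{C}}$ as $Ind_T^G(\lambda_1-\lambda_2)$. The difference lies in how you produce the expansion of $X_{\mc{O}}^+$ into principal series. You invoke the general fact that an irreducible Harish-Chandra module of a complex group at a fixed (integral) infinitesimal character is, in the Grothendieck group, an integer combination of the standard modules $X(\gamma,w\gamma)$ --- an abstract appeal to Kazhdan--Lusztig--Vogan theory that yields existence of the expression $\sum_w c_w\,Ind_T^G(\gamma-w\gamma)$ with $c_w\in\bb{Z}$ but no explicit $W_{\mc{O}}$ or $c_w$. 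The paper never needs the KLV algorithm: the character expansion comes for free from the Barbasch--Vogan construction of the special unipotent representations (Algorithm \ref{alg:specialunipotent}), namely $X_{\mc{O}}^{\pm}=\tfrac12(R_e\pm R_s)$ with $R_e$, $R_s$ explicit alternating sums of principal series over $W(D_p\times C_{p+q})$ and $W(D_{p+q+1}\times C_{p-1})$, which immediately gives Equation \eqref{eq:ro} and hence Theorem B with $\mu=\lambda_{\mc{O}}$, $c_w=\pm\tfrac12$, and an explicit $W_{\mc{O}}$. Your route is more general and would apply to any quantization by an irreducible module at integral infinitesimal character, but the explicitness of the paper's formula is what is actually used downstream (Propositions \ref{prop:voganmap1}, \ref{prop:voganmap2} and Theorem \ref{thm:asproof} require identifying the maximal term $Ind_T^G(2\lambda_{\mc{O}})$ and which of $X_{\mc{O}}^{\pm}$ contains it), so the abstract existence statement alone would not suffice for Theorems C and D. Two small remarks: you correctly identify that all the real content is in Theorem A, but its proof in the paper goes through the Howe dual pair ($\theta$-correspondence with $O(2p,\bb{C})$, via Adams--Barbasch and Lee--Zhu) matched against Costantini's computation of $R(\mc{O})$, not a $\theta$-stable/cohomologically induced model; and your dot-action bookkeeping ($\mu+\rho=\gamma$, so $\gamma-w\gamma=\mu-w\cdot\mu$) is a harmless reformulation of the paper's choice $\mu=\lambda_{\mc{O}}$.
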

For instance, if $\mc{O}$ is the principal nilpotent orbit, then $W_{\mc{O}} = \{Id\}$, $\mu = 0$ and $c_{Id} = 1$. If $\mc{O}$ is the trivial orbit, then $W_{\mc{O}} = W_G$, $\mu = \rho := \frac{1}{2}\sum_{\alpha \in \Delta^+} \alpha$ and $c_w = (-1)^{l(w)}$ ($l(w)$ is the length of the Weyl group element $w$ in its reduced form). Indeed, Theorem A gives an affirmative answer to Conjecture \ref{conj:mc}:\\

\noindent \textbf{Theorem B} (Corollary \ref{cor:rounipotent}(b)). \textit{
Conjecture \ref{conj:mc} holds for $\mc{O} = (2^{2p}1^{2q})$ in $Sp(2n,\bb{C})$. Furthermore, the same conjecture holds for the simply-connected cover $\tilde{\mc{O}}$ of $\mc{O}$.}\\

\noindent \textbf{(II)\ The Lusztig-Vogan bijection.}  The second application of the quantization model is related to the Lusztig-Vogan conjecture, which we will describe below.\\

Let $G$ be a connected complex simple Lie group, and let $\Lambda^+(G) \subset \mf{t}^*$ be the collection of highest dominant weights of finite dimensional representations of $G$. In an attempt of checking unitarity of certain classes of representations (see Lecture 8 of \cite{V1998}), Vogan conjectured that there is a bijection between the sets
$$\mc{N}_{o,r} := \{ (e,\tau)| e\ \text{nilpotent element,}\ \tau \in \widehat{G^e} \}/ \sim \ \longleftrightarrow \lambda \in \Lambda^+(G),$$
where $\widehat{G^e}$ is the set of all algebraic (finite-dimensional) irreducible representations of $G^e$, the stabilizer group of $e$ in $G$. Since the set $\mc{N}_{o,r}$ is defined up to conjugacy, we will denote any element in $\mc{N}_{o,r}$ by $(\mc{O},\sigma)$ instead of $(e, \sigma)/\sim$.\\

We now describe the construction of the conjectured map. Given $(\mc{O},\sigma) \in \mc{N}_{o,r}$, consider the expression (which is unique by Theorem 8.2 of \cite{V1998})
\begin{align} \label{eq:lusztigvogan}
Ind_{G^e}^G(\sigma) = \sum_{\lambda \in \Lambda^+(G)} m_{\lambda}(\mc{O},\sigma) Ind_{T}^{G}(\lambda),
\end{align}
where all but finitely many $m_{\lambda}(\mc{O},\tau) \in \bb{Z}$ are zero. Then Vogan's conjectured bijection map is given by
$$\gamma: (\mc{O},\sigma) \mapsto \lambda_{max},$$
with $\lambda_{max}$ being the maximal element in $\Lambda^+(G)$ such that $m_{\lambda}(\mc{O},\sigma) \neq 0$.\\

A priori this map is not well-defined, and the core of the problem is to make sense out of this map. In the case of $G = SL(n,\bb{C})$, this map is well-defined and made explicit by Achar in his Ph.D. thesis \cite{Ac1}. We will study the conjectured bijection when $G = Sp(2n,\bb{C})$, $\mc{O} = (2^{2p}1^{2q})$ and $\sigma$ is a one-dimensional representation of $G^e$.\\

Note that in the case when $\sigma = \mathrm{triv}$, $Ind_{G^e}^G(\mathrm{triv}) = R(\mc{O})$, and Conjecture \ref{conj:mc} gives the expression of $R(\mc{O})$ in the form of Equation \eqref{eq:lusztigvogan}. It turns out that Vogan's map can be read off much more easily using Conjecture \ref{conj:mc} then just using Theorem \ref{thm:mc}. As an evidence, Chmutova-Ostrik \cite{CO} attempted to compute Vogan's map for $(\mc{O},\mathrm{triv})$, using Theorem \ref{thm:mc} and some extra tools. Yet from the tables of their computed results, many orbits, especially the spherical orbits, are left blank. Our work will fill out some blanks left out by Chmutova-Ostrik:\\

\noindent \textbf{Theorem C} (Proposition \ref{prop:voganmap1}, \ref{prop:voganmap2}). \textit{
Let $\mc{O} = (2^{2p}1^{4r+2})$ be a nilpotent orbit in $Sp(2n,\bb{C})$, then
$$\gamma(\mc{O},\mathrm{triv}) = (2p+4r+2, 2p+4r, \dots, 2p+2, 2p, (2p-2)^2, \dots, 4^2, 2^2, 0)$$
(here the superscript denotes the multiplicity of the coefficient appearing in the expression). Similarly, for $\mc{O} = (2^{2p}1^{4r})$,
$$\gamma(\mc{O},\mathrm{triv}) = (2p+4r, 2p+4r-2, \dots, 2p+2, (2p-1)^2, (2p-3)^2, \dots, 3^2, 1^2).$$}

\noindent \textbf{(III)\ A conjecture by Achar and Sommers.}
Furthermore, Achar and Sommers made a conjecture on the Vogan's map in \cite{AS} related to the Lusztig-Spaltenstein dual ${}^L\mc{O}$ of $\mc{O}$. We briefly recall the conjecture here:\\

Let $\mc{N}_o$ be the set of all nilpotent orbits in $\mf{g}$, ${}^L{\mc{N}_o}$ be the set of all nilpotent orbits in the Langlands dual ${}^L\mf{g}$. In \cite{S2}, Sommers constructed a surjective map
$$d: \mc{N}_{o,c}\twoheadrightarrow {}^L\mc{N}_o,$$
where $\mc{N}_{o,c} = \{(\mc{O},C) | \mc{O} \in \mc{N}_o, C \subset \overline{A}(\mc{O})\ \text{conjugacy class} \}$. Here is a few features of the Sommers' map:\\
$\bullet$\ If we fix $C$ to be the trivial conjugacy class, then the above map is the Spaltenstein dual map.\\
$\bullet$\ For every ${}^L\mc{O} \in {}^L\mc{N}_o$, there is a special orbit $\mc{O} \in \mc{N}_o$ and a canonical conjugacy class $C$ such that $d(\mc{O},C) = {}^L\mc{O}$.\\
$\bullet$\ More precisely, if ${}^L\mc{O} \in {}^L\mc{N}_o$ is special, then the canonical preimage of $d$ is $(\mc{O},1)$, where $\mc{O}$ is the Spaltenstein dual to ${}^L\mc{O}$.\\

Fix ${}^L\mc{O} \in {}^L{\mc{N}_o}$ with $\mf{sl}_2$-triple $\{{}^Le, {}^Lf, {}^Lh\}$, and $(\mc{O},C)$ is the above-mentioned canonical preimage of the surjection $d$. In the case of classical groups, $C \subset \overline{A}(\mc{O}) \cong (\bb{Z}/2\bb{Z})^r$ is a single element. \\

Let $s_i$ be the non-trivial element in the $i^{th}$-copy of $(\bb{Z}/2\bb{Z})^r$. Then $C = \Pi_{i \in I} s_i$ for some subset $I \subset \{1,2,\dots,r\}$. Define
$$H_C := \langle s_i | i \in I \rangle \leq \overline{A}(\mc{O}),$$
and consider the preimage of $H_C$ under the quotient map $\pi: A(\mc{O}) \to \overline{A}(\mc{O})$, i.e. $K_C := \pi^{-1}(H_C)$. Then $K_C$, as a subgroup of the $G$-equivariant fundamental group $A(\mc{O})$, corresponds to an orbit cover $\mc{O}_{C} \cong G/G_C$ of $\mc{O}$. Then the Conjecture of Achar and Sommers is given by:

\begin{conjecture}(\cite{AS} Conjecture 3.1) \label{conj:AS}
Writing
$$R(\mc{O}_C) \cong Ind_{G_C}^{G}(\mathrm{triv}) = \sum_{\lambda \in \Lambda^+} m_{\lambda} Ind_{T}^G(\lambda)$$
as in the form of Equation \eqref{eq:lusztigvogan}, then the maximal element in the expression is equal to ${}^Lh$.
\end{conjecture}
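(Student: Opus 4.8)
The plan is to establish Conjecture~\ref{conj:AS} for $\mc{O}=(2^{2p}1^{2q})$ by feeding Theorem A into the character formula of Conjecture~\ref{conj:mc} (Theorem B). The first step is a reduction. Since $(2^{2p}1^{2q})$ has a single distinct even part, $A(\mc{O})\cong\bb{Z}/2\bb{Z}$, so $\mc{O}$ admits exactly two orbit covers --- $\mc{O}$ itself and its universal (double) cover $\tilde{\mc{O}}$ --- and every cover $\mc{O}_C$ appearing in Conjecture~\ref{conj:AS} for an orbit of this form is one of these two, the precise matching (which canonical $C$ yields which cover) being dictated by $\bar A(\mc{O})$ and the quotient map $\pi$. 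By Theorem A, $R(\mc{O})\cong X_{\mc{O}}^+$ and $R(\tilde{\mc{O}})\cong X_{\mc{O}}^+\oplus X_{\mc{O}}^-$, so in every case $R(\mc{O}_C)$ is a (possibly one-term) sum of special unipotent representations; each summand is attached to a definite dual nilpotent orbit and therefore carries infinitesimal character equal to one half of the associated neutral element, viewed in $\mf{t}^*\cong{}^L\mf{t}$.

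The second step is to read $\lambda_{max}$ off the $\mathrm{Ind}_T^G$-expansion. For a single special unipotent summand $X$ of infinitesimal character $\mu$, Theorem B expresses $X=\sum_{w}c_w\,\mathrm{Ind}_T^G(\mu-w\mu)$, with $c_w=(-1)^{l(w)}$ and $w$ ranging over a minimal-length transversal of $W_G$ modulo the stabilizer of $\mu$ --- exactly as in the examples following Conjecture~\ref{conj:mc} (trivial orbit: $\mu=\rho$; principal orbit: $\mu=0$). As $-1\in W_G$ in type $C_n$, one has $\mu-w\mu\le\mu-w_0\mu=2\mu$ for every such $w$, the value $2\mu$ being attained by a single transversal element, whose coefficient is $\pm1$; hence, after collecting like terms, $\lambda_{max}(X)=2\mu$, which is exactly the neutral element ${}^Lh'$ of the dual orbit to which $X$ is attached. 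For $R(\mc{O}_C)=X_{\mc{O}}^+\oplus X_{\mc{O}}^-$ one simply adds the two such expansions and takes the larger leading term. In this way Conjecture~\ref{conj:AS} is reduced to a single assertion: the dual orbit ${}^L\mc{O}'$ to which $R(\mc{O}_C)$ is attached (i.e.\ the one giving the larger ${}^Lh'$) coincides with the Sommers dual $d(\mc{O},C)$.

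This final matching is where I expect the main difficulty. On the combinatorial side one must compute $d$ on the two relevant pairs: for $C$ trivial this is the Lusztig--Spaltenstein dual ${}^d\mc{O}$ of $(2^{2p}1^{2q})$, obtained by the transpose-and-$B$-collapse recipe, while for the generator of $\bar A(\mc{O})$ one must transport the canonical conjugacy class through $\pi:A(\mc{O})\to\bar A(\mc{O})$ and run Sommers' construction, producing a different (strictly larger) dual orbit; in each case one writes the resulting partition and hence ${}^Lh'$ explicitly. On the representation side one must identify, from the model underlying Theorem A, which special unipotent representations $X_{\mc{O}}^+$ and $X_{\mc{O}}^-$ are --- equivalently, determine their infinitesimal characters --- and match these with the $\tfrac12{}^Lh'$ just computed. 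The obstacle is precisely this double bookkeeping: it marries the dictionary between orbit covers and subgroups of $A(\mc{O})$, Sommers' recipe for the canonical class, and the placement of $X_{\mc{O}}^{\pm}$ in the classification of special unipotent representations of $Sp(2n,\bb{C})$; the parity of $q$ enters here (it controls which cover is paired with $C$ trivial and whether $\mc{O}$ is itself special), and the degenerate case $p=0$ is handled separately. Once the two descriptions of ${}^L\mc{O}'$ are reconciled, comparing the explicit expressions for ${}^Lh'$ completes the proof, and for $C$ trivial this comparison recovers exactly the formulas of Theorem C.
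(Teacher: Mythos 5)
Your proposal breaks down at the central claim of your second step: that a single special unipotent summand $X$ with infinitesimal character $\mu$ has maximal term $2\mu$, attained by one transversal element with coefficient $\pm 1$. The actual character formulas are $X_{\mc{O}}^{\pm}=\frac{1}{2}(R_e\pm R_s)$, where $R_e$ and $R_s$ are signed sums over the proper Weyl subgroups $W(D_p\times C_{p+q})$ and $W(D_{p+q+1}\times C_{p-1})$ respectively --- not a signed sum over a minimal-length transversal of $W_G$ modulo the stabilizer of $\mu$. Consequently the term $Ind_T^G(2\lambda_{\mc{O}})$ occurs in $X_{\mc{O}}^{+}$ with coefficient $\frac{1}{2}\bigl((-1)^{l(w_0)}+(-1)^{l(w_0')}\bigr)$ and in $X_{\mc{O}}^{-}$ with coefficient $\frac{1}{2}\bigl((-1)^{l(w_0)}-(-1)^{l(w_0')}\bigr)$, and exactly one of these vanishes: by Propositions \ref{prop:voganmap1} and \ref{prop:voganmap2}, when $q$ is even the leading term of $X_{\mc{O}}^{+}$ is strictly smaller than $2\lambda_{\mc{O}}$. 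So "each summand has leading term $2\mu$" is false, and even if it were true, adding the two expansions could a priori cancel the top term (coefficients $+1$ and $-1$), which you never rule out. The paper's route avoids both problems by never analyzing the summands separately: for the relevant cover one has $R(\mc{O}_C)=R(\tilde{\mc{O}})\cong X_{\mc{O}}^{+}\oplus X_{\mc{O}}^{-}=R_e$ (Equation \eqref{eq:rtildeo}), a single signed sum in which $Ind_T^G(2\lambda_{\mc{O}})$ survives with coefficient $(-1)^{l(w_0)}\neq 0$.

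The second gap is that the identifications you defer to "the dictionary" and flag as the main difficulty are precisely the content that must be supplied, and your framing of them is off. Since ${}^L\mc{O}$ is special, the canonical class $C$ is trivial; for $\mc{O}=(2^{2p}1^{2q})$ the map $\pi:A(\mc{O})\to\overline{A}(\mc{O})$ is an isomorphism of copies of $\bb{Z}/2\bb{Z}$, so $K_C$ is trivial and $\mc{O}_C=\tilde{\mc{O}}$ is the universal cover --- you never pin this down. Likewise ${}^Lh=2\lambda_{\mc{O}}$ is immediate from Step (1) of Algorithm \ref{alg:specialunipotent}, so there is no "matching of dual orbits" left to do: $X_{\mc{O}}^{+}$ and $X_{\mc{O}}^{-}$ are attached to the same orbit $\mc{O}$ and share the single infinitesimal character $\lambda_{\mc{O}}=\frac{1}{2}{}^Lh$, whereas your plan imagines two summands attached to different dual orbits, with the larger ${}^Lh'$ to be compared against Sommers' dual of the nontrivial class --- a computation that is irrelevant to the conjecture, which concerns only the canonical preimage $(\mc{O},1)$. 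As written, the proposal both leaves the decisive identifications undone and relies on an intermediate claim that the paper itself disproves.
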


\noindent \textbf{Theorem D} (Theorem \ref{thm:asproof}).
\textit{Conjecture 3.1 of \cite{AS} holds for $\mc{O} = (2^{2p}1^{2q})$ in $Sp(2n,\bb{C})$.}\\

In fact, using the results in \cite{B 2008}, one can also show that the conjecture holds for all classical nilpotent orbits satisfying the hypothesis of Theorem 2.0.6 of \cite{B 2008} (Theorem \ref{thm:b2008}).\\

\noindent \textbf{Notations.} Let $G$ be a complex simple Lie group with Borel subgroup $B$ and maximal compact subgroup $K$. Pick a and Cartan subgroup $H$ so that it contains a maximal torus of $K$. Write their corresponding Lie algebras as $\mf{g}$, $\mf{b}$,$\mf{k}$ and $\mf{h}$ respectively.\\

Treating $G$ as a real Lie group with maximal compact subgroup $K$, we can identify $\mf{g}_{\bb{C}} = \mf{g} \times \mf{g}$ with Cartan subalgebra $\mf{h}_{\bb{C}} = \mf{h} \times \mf{h}$, compact torus $\mf{t} = \{(x,-x) \in \mf{h}_{\bb{C}} | x \in \mf{h} \}$ and split torus $\mf{a}= \{ (x,x) \in \mf{h}_{\bb{C}} | x \in \mf{h} \}$ with $\mf{h} = \mf{t} \oplus \mf{a}$. Given $(\lambda_1, \lambda_2) \in \mf{h}^*$ so that $(\lambda_1 - \lambda_2)$ is a weight of a finite dimensional holomorphic representation of $G$, then $e^{(\lambda_1, \lambda_2)}$ can be treated as a character of $H$, and the \textit{principal series representation} with character $(\lambda_1,\lambda_2)$ is the $(\mf{g}_{\bb{C}}, K_{\bb{C}})$-module
$$X \left(
\begin{array}{cccccccccc}
\lambda_1 \\
\lambda_2 \\
\end{array}
\right) = K-\text{finite part of }Ind_B^G(e^{(\lambda_1,\lambda_2)} \otimes 1).$$
Since $e^{(\lambda_1, \lambda_2)}|_T = e^{\lambda_1-\lambda_2}$, the $K_{\bb{C}}$-types of $X \left(
\begin{array}{cccccccccc}
\lambda_1 \\
\lambda_2 \\
\end{array}
\right)$ is equal to $Ind_T^G(\lambda_1-\lambda_2)$.

\begin{proposition}[\cite{BV 1985} Proposition 1.8] \label{prop:BVprop}
Consider $(\lambda_1, \lambda_2)$, $(\lambda_1', \lambda_2') \in \mf{h}^* \times \mf{h}^*$ such that $\lambda_1 - \lambda_2$, $\lambda_1' - \lambda_2'$ are weights of some finite dimensional holomorphic representations of $G$. The following are equivalent:\\
$\bullet$\ $(\lambda_1, \lambda_2) = (w\lambda_1',w\lambda_2')$ for some $w \in W$, the Weyl group in $G$.\\
$\bullet$\ $X \left(
\begin{array}{cccccccccc}
\lambda_1 \\
\lambda_2 \\
\end{array}
\right)$ and $X \left(
\begin{array}{cccccccccc}
\lambda_1' \\
\lambda_2' \\
\end{array}
\right)$ have the same composition factors with same multiplicities.\\
$\bullet$\ The Langlands subquotient of $X \left(
\begin{array}{cccccccccc}
\lambda_1 \\
\lambda_2 \\
\end{array}
\right)$, written as $L \left(
\begin{array}{cccccccccc}
\lambda_1 \\
\lambda_2 \\
\end{array}
\right)$, is the same as that of $X \left(
\begin{array}{cccccccccc}
\lambda_1' \\
\lambda_2' \\
\end{array}
\right)$.\\
Furthermore, every irreducible $(\mf{g}_{\bb{C}}, K_{\bb{C}})$-modules is equivalent to some $L \left(
\begin{array}{cccccccccc}
\lambda_1 \\
\lambda_2 \\
\end{array}
\right)$.
\end{proposition}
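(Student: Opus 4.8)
This is Proposition 1.8 of \cite{BV 1985}; the plan is to recall how it falls out of the Langlands classification specialized to the complex group $G$ regarded as a real reductive group, in which setting every standard module is a minimal principal series $X\left(\begin{smallmatrix}\lambda_1\\\lambda_2\end{smallmatrix}\right)$ (the only cuspidal data on a complex group being characters of the compact torus $T$). Two preliminary remarks organize everything. First, the Harish--Chandra isomorphism $Z(\mf{g}_{\bb{C}}) \cong Z(\mf{g}) \otimes Z(\mf{g})$ identifies the infinitesimal character of $X\left(\begin{smallmatrix}\lambda_1\\\lambda_2\end{smallmatrix}\right)$ with the pair $(\lambda_1,\lambda_2)$, well defined modulo $W \times W$. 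Second --- and this is where the structure $\mf{g}_{\bb{C}} = \mf{g}\times\mf{g}$ with $K_{\bb{C}}$ the diagonal enters, forcing a \emph{single} $w$ in the first bullet --- the Weyl group of $G$ as a real group acts on the parameter space $\mf{h}^*\times\mf{h}^*$ through the diagonal copy of $W = W(\mf{g},\mf{h})$. I would record both of these at the outset.

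For (first bullet)$\Rightarrow$(second bullet) I would pick a representative $\dot{w}\in N_K(H)$ of $w$; conjugation by $\dot{w}$ is an isomorphism of $(\mf{g}_{\bb{C}},K_{\bb{C}})$-modules $\mathrm{Ind}_B^G(e^{(\lambda_1,\lambda_2)}) \xrightarrow{\,\sim\,} \mathrm{Ind}_{\dot{w}B\dot{w}^{-1}}^G(e^{(\lambda_1',\lambda_2')})$, after which it remains to invoke the standard fact that for two Borel subgroups $B, B' \supseteq H$ the principal series $\mathrm{Ind}_B^G(e^{\chi})$ and $\mathrm{Ind}_{B'}^G(e^{\chi})$ have the same composition factors with the same multiplicities (pass between them by standard intertwining operators, or compare distribution characters on the regular elliptic set and use linear independence of characters). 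The implication (second)$\Rightarrow$(third) is then immediate, since $L\left(\begin{smallmatrix}\lambda_1\\\lambda_2\end{smallmatrix}\right)$ is by construction the distinguished composition factor of the standard module carrying the Langlands data of the inducing parameter, and hence depends only on the Jordan--Hölder content.

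For (third)$\Rightarrow$(first) I would start from $L\left(\begin{smallmatrix}\lambda_1\\\lambda_2\end{smallmatrix}\right)\cong L\left(\begin{smallmatrix}\lambda_1'\\\lambda_2'\end{smallmatrix}\right)$, compare infinitesimal characters to get $\lambda_1' = w_1\lambda_1$ and $\lambda_2' = w_2\lambda_2$ for some $w_1, w_2 \in W$, and then force $w_1 = w_2$ using two pieces of rigidity special to complex groups: the lowest $K$-type of $L\left(\begin{smallmatrix}\lambda_1\\\lambda_2\end{smallmatrix}\right)$ has highest weight $W$-conjugate to $\lambda_1 - \lambda_2$ (it occurs with multiplicity one in $\mathrm{Ind}_T^G(\lambda_1-\lambda_2)$ and survives in the Langlands quotient), so $\lambda_1-\lambda_2$ and $\lambda_1'-\lambda_2'$ lie in one $W$-orbit; and the continuous part $\lambda_1+\lambda_2\in\mf{a}^*$ is then pinned down up to the \emph{same} Weyl element by the uniqueness clause of the Langlands classification. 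This yields a single $w$ with $(\lambda_1,\lambda_2) = (w\lambda_1',w\lambda_2')$. The final assertion --- that every irreducible $(\mf{g}_{\bb{C}},K_{\bb{C}})$-module is some $L\left(\begin{smallmatrix}\lambda_1\\\lambda_2\end{smallmatrix}\right)$ --- is just the existence half of the same classification.

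I expect the only genuine obstacle to be the bookkeeping when $(\lambda_1,\lambda_2)$ is singular or its $\mf{a}^*$-component is not strictly dominant: then $L\left(\begin{smallmatrix}\lambda_1\\\lambda_2\end{smallmatrix}\right)$ must be defined with some care (conjugate the parameter into the closed chamber first), the uniqueness statement must be used in full generality, and the lowest-$K$-type argument must be run in its singular form. Since all of this is precisely what is carried out in \cite{BV 1985}, in the paper I would simply cite Proposition 1.8 there rather than reproduce the argument.
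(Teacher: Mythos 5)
Your proposal is correct: the paper itself offers no proof of this statement, quoting it directly as Proposition 1.8 of \cite{BV 1985}, and your sketch is the standard Zhelobenko--Langlands classification argument for complex groups that underlies that reference. Since you also conclude by deferring to the same citation, your treatment matches the paper's.
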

Here is a couple of applications of the above Proposition in our following work:\\

\noindent \textbf{(a)}\ In formulas concerning only with the Grothendieck group of $(\mf{g}_{\bb{C}},K_{\bb{C}})$-modules, for instance the global character formulas, or $K_{\bb{C}}$-type decompositions, we use the equivalence $X \left(
\begin{array}{cccccccccc}
w\lambda_1 \\
w\lambda_2 \\
\end{array}
\right) \cong X \left(
\begin{array}{cccccccccc}
\lambda_1 \\
\lambda_2 \\
\end{array}
\right)$ in the Grothendieck group to obtain expressions of a more desirable form (e.g. Equations \eqref{eq:Re}, \eqref{eq:Rs} below).\\

\noindent \textbf{(b)}\ In writing the Langlands subquotient, we tacitly pick a suitable $w$ to rearrange the coefficients, so that $L \left(
\begin{array}{cccccccccc}
w\lambda_1 \\
w\lambda_2 \\
\end{array}
\right) \cong L \left(
\begin{array}{cccccccccc}
\lambda_1 \\
\lambda_2 \\
\end{array}
\right)$ is of our desirable form. (e.g. Theorem \ref{thm:AB}, Proposition \ref{prop:unipotenttheta})\\

We will describe elements in $\mf{h}^*$ using coordinates, e.g. $\epsilon_i = (0,\dots,0,\overbrace{1}^{i^{th}-\mathrm{coordinate}},0,\dots,0)$. Fix a simple root system $\Pi^+ = \{\epsilon_1 - \epsilon_2, \epsilon_2 - \epsilon_3, \dots, \epsilon_{n-1} - \epsilon_n, 2\epsilon_n\} \subset \Delta^+ \subset \mf{h}^*$, then every irreducible highest weight module of $G$ is parametrized by $V_{(a_1, a_2, \dots, a_n)}$, where $a_1 \geq a_2 \geq \dots \geq a_n \geq 0$ are non-negative integers.\\

Note also that the Weyl group $W \cong S_n \ltimes (\bb{Z}/2\bb{Z})^n$ acts on $\mf{h}^*$. We write $\alpha_1 \sim \alpha_2$ if two elements $\alpha_1$, $\alpha_2 \in \mf{h}^*$ are conjugate to each other by an element in $W$.

\section{Dual Pair Correspondence}
Recall the dual pair correspondence obtained by Adams and Barbasch in \cite{AB}.
\begin{theorem}(\cite{AB} Theorem 2.8) \label{thm:AB}
Let $G' = O(2p, \bb{C})$ and $G = Sp(2n,\bb{C})$ with $n = 2p+q$. Writing the dual correspondence map as $\theta$, then as $(\mf{g},K)$-modules,
\begin{align*}
\theta(\mathrm{triv}) &= L \left(
\begin{array}{cccccccccc}
p+q, p+q-1, \dots, 2, 1;& p-1, p-2, \dots, 1, 0\\
p+q, p+q-1, \dots, 2, 1;& p-1, p-2, \dots, 1, 0  \\
\end{array}
\right),\\
\theta(\det) &= L \footnotesize{ \left(
\begin{array}{cccccccccccc}
 p+q, p+q-1, \dots, p+1; &p, &p-1, &\dots, &1, &0, &-1, &\dots, &-p+1\\
 p+q, p+q-1, \dots, p+1; &p-1, &p-2, &\dots, &0, &-1, &-2 &\dots, &-p \\
\end{array}
\right)},
\end{align*}
\normalsize{where $\mathrm{triv}, \det$ are the trivial and determinant representations of $G'$, treated as $(\mf{g}', K')$-modules.}
\end{theorem}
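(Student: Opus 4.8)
The plan is to identify $\theta(\mathrm{triv})$ and $\theta(\det)$, the theta lifts to $Sp(2n,\bb{C})$ of the two one-dimensional representations of $O(2p,\bb{C})$ (with $n=2p+q$), by realizing the Weil representation of this complex dual pair explicitly and recovering each lift from three pieces of data: its infinitesimal character, its $K_{\bb{C}}$-type spectrum, and the classification of irreducible $(\mf{g}_{\bb{C}},K_{\bb{C}})$-modules up to $W$-conjugacy of the principal-series parameter (Proposition \ref{prop:BVprop}). (This is a special case of the general Adams--Barbasch parametrization of $\theta$; the task is to specialize that machinery.) \emph{First, the infinitesimal character.} The action of the center of $U(\mf{g}_{\bb{C}})$ on the Weil representation gives the standard dual-pair $\rho$-shift: a lift from $O(2p,\bb{C})$ to $Sp(2n,\bb{C})$ of a module with infinitesimal character $\chi$ has infinitesimal character obtained from $\chi$ by adjoining $\rho$ of the complementary symplectic group $Sp(2(n-p),\bb{C})=Sp(2(p+q),\bb{C})$, i.e. $(p+q,p+q-1,\dots,1)$. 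Both $\mathrm{triv}$ and $\det$ of $O(2p,\bb{C})$ have infinitesimal character $\rho_{\mf{so}(2p)}=(p-1,p-2,\dots,1,0)$, so both lifts have infinitesimal character the $W$-orbit of $(p+q,\dots,1,p-1,\dots,1,0)$ --- precisely that of the two parameters in the statement. This confines each lift to the finitely many irreducibles with that infinitesimal character.

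\emph{Next, the $K_{\bb{C}}$-type spectrum.} Restricting the Weil representation to the joint maximal compact subgroup and applying classical invariant theory (Howe duality on the polynomial model) gives a multiplicity-free decomposition; the $O(2p)$-invariant part is the $K_{\bb{C}}=Sp(2n,\bb{C})$-type spectrum of $\theta(\mathrm{triv})$ and the $\det$-isotypic part that of $\theta(\det)$. In particular $\theta(\mathrm{triv})$ contains the trivial $K_{\bb{C}}$-type, hence is spherical, while $\theta(\det)$ has lowest $K_{\bb{C}}$-type $V_{(1^{2p},0^q)}$ --- the top exterior power on the defining $O(2p)$-factor. Howe duality for the complex pair also gives irreducibility of each lift, so each is one of the candidates from the previous paragraph.

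\emph{Finally, the matching.} On the dual side, the Langlands classification for complex reductive groups gives the infinitesimal character and lowest $K_{\bb{C}}$-type of the subquotient $L\left(\begin{array}{c}\lambda_1\\\lambda_2\end{array}\right)$ attached to each stated parameter: in the first case $\lambda_1-\lambda_2=0$, so it is the spherical subquotient; in the second $\lambda_1-\lambda_2\sim\epsilon_1+\dots+\epsilon_{2p}$, so its lowest $K_{\bb{C}}$-type is $V_{(1^{2p},0^q)}$. Comparing with the previous paragraph and using Proposition \ref{prop:BVprop} to move freely between $W$-conjugate parameters isolates each lift uniquely among the candidates, proving the two equalities. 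I expect this last step to be the real obstacle: having the correct infinitesimal character and lowest $K_{\bb{C}}$-type does not yet pin the lift down as precisely the Langlands \emph{quotient} of that particular (reordered) principal series, so one must either carry out a full $K_{\bb{C}}$-type comparison or prove a uniqueness statement (there is a single irreducible with the given infinitesimal character and lowest $K_{\bb{C}}$-type). The $\det$ lift is the more delicate of the two, since its parameter is non-spherical and the $\mathrm{triv}$-versus-$\det$ distinction on $O(2p)$ --- which collapses on $SO(2p)$ --- must be carried through the whole computation.
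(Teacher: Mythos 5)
Your strategy is a genuine re-derivation of the cited result rather than what the paper does: the paper's proof is essentially bookkeeping, identifying $\mathrm{triv}$ and $\det$ with $L(\mu,\nu;\pm)=L(0,2\rho;\pm)$ in the notation of \cite{AB} and translating the explicit parameters of Theorem 2.8 there from $(\mu,\nu)$ to $(\lambda_1,\lambda_2)=(\frac{\mu+\nu}{2},\frac{\mu-\nu}{2})$. Your first two steps are sound in outline: the $\rho$-shift gives the correct infinitesimal character, and the joint-harmonics argument gives sphericity of $\theta(\mathrm{triv})$ and lowest $K_{\bb{C}}$-type $(1^{2p},0^q)$ for $\theta(\det)$ (though note the full $K_{\bb{C}}$-spectrum facts you invoke are themselves nontrivial; the paper proves them separately as Proposition \ref{prop:LZ}). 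For $\theta(\mathrm{triv})$ your argument does close: lowest $K_{\bb{C}}$-type $0$ forces $\lambda_1=\lambda_2$, and within the fixed infinitesimal character the stated spherical parameter is the unique such parameter up to diagonal $W$-conjugacy.

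The genuine gap is the matching step for $\theta(\det)$, exactly where you stop, and the uniqueness statement you offer as one escape route is false: infinitesimal character together with lowest $K_{\bb{C}}$-type does not determine the irreducible module. Already for $p=q=1$ (so $n=3$), the two parameters $\bigl((2,1,0),(2,0,-1)\bigr)$ (the one in the statement) and $\bigl((2,1,0),(1,2,0)\bigr)$ both lie in the $W\times W$-orbit of $\bigl((2,1,0),(2,1,0)\bigr)$, and both differences are $W$-conjugate to $(1,1,0)$, so both Langlands subquotients have lowest $K_{\bb{C}}$-type $(1,1,0)$; yet they are not diagonally conjugate (the stabilizer of $(2,1,0)$ in $W(C_3)$ is generated by the sign change in the last coordinate, which does not carry $(1,2,0)$ to $(2,0,-1)$), hence by Proposition \ref{prop:BVprop} the two subquotients are non-isomorphic. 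So your other option, a full $K_{\bb{C}}$-spectrum comparison, is unavoidable, and it requires knowing the $K_{\bb{C}}$-spectra of the competing subquotients $L(\lambda_1,\lambda_2)$ --- information not available at this stage (obtaining it is essentially the content of \cite{AB} and of later sections of this paper). In \cite{AB} the parameter of the nonspherical lift is pinned down by explicit computations with induced modules, and in this paper the analogous identification of the nonspherical module $X_{\mc{O}}^-$ (Proposition \ref{prop:unipotenttheta}) is done by a character computation in the Grothendieck group, not by infinitesimal character and lowest $K_{\bb{C}}$-type alone; an argument of that kind must be supplied to complete your proof of the $\det$ case.
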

\begin{proof}
The explicit image of $\theta$ is given precisely in Theorem 2.8 of \cite{AB}. Notice that in their notations, $\mu = \lambda_1 - \lambda_2$ and $\nu = \lambda_1 + \lambda_2$. In the notation of \cite{AB},
$$ \mathrm{triv} = L(\mu, \nu;\pm) = L(0, 2\rho; +),\ \ \ \det = L(0, 2\rho; -),$$
where $\rho = \frac{1}{2}\sum_{\alpha \in \Delta^+} \alpha = (p-1, p-2, \dots, 1, 0)$ (so that both $\lambda_1 = \frac{\mu+\nu}{2}$ and $\lambda_2 = \frac{\mu - \nu}{2}$ are both conjugate to $\rho$, the infinitesimal character of the trivial representation in $SO(2p)$). Then the result follows from directly applying Theorem 2.8 and the translation from $(\mu, \nu)$ to $(\lambda_1, \lambda_2)$ described above.
\end{proof}

\begin{remark}\mbox{}\\
\noindent (1)\ Note that $\theta(triv)$ is a spherical representation, being the quotient of $U(\mathfrak g)$ by the maximal ideal $I$ of its infinitesimal character, while $\theta(det)$ is a nonspherical $U(\mathfrak g)$-bimodule with annihilator $I$. In other words, they are both \textbf{unipotent representations} in the sense of Definition 5.23 of \cite{BV 1985} (see Proposition \ref{prop:unipotenttheta} below).\\

\noindent (2)\ In fact, the original definition of dual pair correspondence between $(G',G)$ $=$ $(O(2p,\bb{C})$, $Sp(2n,\bb{C}))$ should be the correspondence between the genuine irreducible $(\mf{g}_{\bb{C}}',$ $\tilde{K}_{\bb{C}}')$ and $(\mf{g}_{\bb{C}}, \tilde{K}_{\bb{C}})$ modules, where $\tilde{K_{\bb{C}}'}, \tilde{K_{\bb{C}}}$ are double covers of $K_{\bb{C}}'$ and $K_{\bb{C}}$ respectively. However, in the complex group case, both double covers split over $\bb{Z}/2\bb{Z}$, and hence all genuine irreducible $(\mf{g}_{\bb{C}}', \tilde{K}_{\bb{C}}')$ and $(\mf{g}_{\bb{C}}, \tilde{K}_{\bb{C}})$ modules can be characterized by $(\mf{g}_{\bb{C}}', K_{\bb{C}}')$ and $(\mf{g}_{\bb{C}}, K_{\bb{C}})$ modules (\cite{AB} p.4). We will be using this characterization for the rest of the paper.
\end{remark}
We are interested in studying the $K_{\mathbb{C}}$-type decomposition of both $\theta(\mathrm{triv})$ and $\theta(\det)$. Here are the results:
\begin{proposition} \label{prop:LZ}
As $K_{\bb{C}} \cong G = Sp(2n,\bb{C})$-modules,
\begin{align*}
\theta(\mathrm{triv}) &\cong \bigoplus_{\substack{m_i \in \bb{N}\cup\{0\}\\ m_1 \geq m_2 \geq \dots \geq m_{2p}}} V_{(2m_1, 2n_2, \dots, 2m_{2p}, 0, \dots, 0)},\\
\theta(\det) &\cong \bigoplus_{\substack{m_i \in \bb{N}\cup\{0\}\\ m_1 \geq m_2 \geq \dots \geq m_{2p}}} V_{(2m_1+1, 2m_2+1, \dots, 2m_{2p}+1, 0, \dots, 0)}.
\end{align*}
\end{proposition}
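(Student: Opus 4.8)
The plan is to read off the $K_{\bb{C}}$-types from a concrete model of the two lifts. The correspondence $\theta$ is cut out of the oscillator representation $\omega$ of the complex dual pair $(O(2p,\bb{C}),Sp(2n,\bb{C}))$, and the first step is to restrict $\omega$ to the product of complexified maximal compacts $K_{\bb{C}}\times K'_{\bb{C}}=Sp(2n,\bb{C})\times O(2p,\bb{C})$: in the Fock realization this becomes the polynomial algebra $\bb{C}[\mathrm{Mat}_{2p\times 2n}]$ (possibly after a twist by a power of $\det_{O(2p,\bb{C})}$ coming from the metaplectic normalization — harmless here since, as noted after Theorem~\ref{thm:AB}, the relevant two-fold covers split), with the natural commuting actions of $GL(2p,\bb{C})\supset O(2p,\bb{C})$ on rows and $GL(2n,\bb{C})\supset Sp(2n,\bb{C})$ on columns. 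The Cauchy identity, i.e. $(GL_{2p},GL_{2n})$-Howe duality (valid as $2p\le 2n$), then gives the $O(2p,\bb{C})\times Sp(2n,\bb{C})$-equivariant splitting $\bb{C}[\mathrm{Mat}_{2p\times 2n}]\cong\bigoplus_{\ell(\lambda)\le 2p}V^{GL_{2p}}_\lambda\boxtimes V^{GL_{2n}}_\lambda$.

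Next I would isolate the $\mathrm{triv}$- and $\det$-isotypic pieces for $O(2p,\bb{C})$. The classical branching $GL_{2p}\downarrow O(2p,\bb{C})$ says $V^{GL_{2p}}_\lambda$ contains a (one-dimensional) $O(2p,\bb{C})$-invariant line exactly when all parts of $\lambda$ are even, and contains the determinant character exactly when all parts of $\lambda$ are odd (equivalently $\lambda-(1^{2p})$ is even, which forces $\ell(\lambda)=2p$); the second statement follows from the first by twisting by $\det_{GL_{2p}}^{-1}$ and using $\det_{GL_{2p}}|_{O(2p,\bb{C})}=\det_{O(2p,\bb{C})}$. This is precisely the source of the two families of highest weights $(2m_1,\dots,2m_{2p},0,\dots)$ and $(2m_1+1,\dots,2m_{2p}+1,0,\dots)$ appearing in the statement.

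The remaining, and hardest, step is to pass from these $O(2p,\bb{C})$-isotypic subspaces of $\omega$ to the actual irreducible lifts $\theta(\mathrm{triv})$ and $\theta(\det)$. This is not automatic: the isotypic piece $\bb{C}[\mathrm{Mat}_{2p\times 2n}]^{O(2p,\bb{C})}\cong\bigoplus_{\lambda\ \mathrm{even},\ \ell(\lambda)\le 2p}V^{GL_{2n}}_\lambda|_{Sp(2n,\bb{C})}$ is reducible and, because the branching $GL_{2n}\downarrow Sp_{2n}$ is nontrivial, contains many $Sp(2n,\bb{C})$-types with multiplicity greater than one, whereas $\theta(\mathrm{triv})$ is irreducible. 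One must identify $\theta(\mathrm{triv})$ with the correct irreducible quotient — the one whose associated variety is the orbit closure $\overline{\mc{O}}$, $\mc{O}=(2^{2p}1^{2q})$. I would do this geometrically: $\overline{\mc{O}}=\{X\in\mf{sp}_{2n}\mid X^2=0,\ \mathrm{rk}\,X\le 2p\}$ is (the normalization of) the total space of $\mathrm{Sym}^2\mc{S}$ over the isotropic Grassmannian $Sp(2n,\bb{C})/P$, with $P$ the stabilizer of an isotropic $2p$-plane (Levi $\cong GL(2p,\bb{C})\times Sp(2q,\bb{C})$) and $\mc{S}$ the tautological subbundle; since $\mathrm{Sym}^\bullet(\mathrm{Sym}^2\mc{S}^*)=\bigoplus_{\mu\ \mathrm{even}}\mathbb{S}_\mu\mc{S}^*$, Borel--Weil--Bott gives $H^0\big(Sp(2n,\bb{C})/P,\ \mathbb{S}_\mu\mc{S}^*\big)=V_{(\mu_1,\dots,\mu_{2p},0,\dots,0)}$ for every partition $\mu$ with $\ell(\mu)\le 2p$, which yields the first formula. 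The second formula follows in the same way with $\mc{S}^*$ replaced by $\mc{S}^*\otimes\det\mc{S}^*$, i.e. $\mathbb{S}_\mu\mc{S}^*$ by $\mathbb{S}_{\mu+(1^{2p})}\mc{S}^*$, shifting each $m_i$ by $+1$; geometrically this is the complement $R(\tilde{\mc{O}})\ominus R(\mc{O})$.

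The main obstacle is exactly this last identification: certifying that the correspondence $\theta$ of \cite{AB} singles out the irreducible quotient with associated variety $\overline{\mc{O}}$ — rather than the raw, reducible isotypic component with the wrong (infinite) multiplicities — and keeping careful track of the $\det_{O(2p,\bb{C})}$-twist in the metaplectic normalization. By contrast the first two steps and the Borel--Weil--Bott bookkeeping are routine. As an alternative that sidesteps the oscillator model, one can instead expand the explicit Langlands quotients of Theorem~\ref{thm:AB} in the Grothendieck group of $(\mf{g}_{\bb{C}},K_{\bb{C}})$-modules as integer combinations of standard modules $X\binom{w\lambda_1}{w\lambda_2}$, whose $K_{\bb{C}}$-types are $Ind_T^G(w\lambda_1-w\lambda_2)$, and sum; then the obstacle migrates to pinning down the Kazhdan--Lusztig/Barbasch--Vogan data of these unipotent representations.
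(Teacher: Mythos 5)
There is a genuine gap at exactly the step you yourself flag as the hardest one, and the fix you propose does not close it. Your first two reductions (Fock model, Cauchy identity, $GL_{2p}\downarrow O(2p,\bb{C})$ branching to locate the $\mathrm{triv}$- and $\det$-isotypic families) are fine and standard, but they only produce the $O(2p)$-isotypic components of the oscillator module, which as you note are far larger than the lifts. Your proposed resolution --- ``identify $\theta(\mathrm{triv})$ with the irreducible quotient whose associated variety is $\overline{\mc{O}}$, then compute via Borel--Weil--Bott on the collapsing of $\mathrm{Sym}^2\mc{S}$'' --- fails on two counts. First, the associated variety does not single out a quotient: $\theta(\mathrm{triv})$ and $\theta(\det)$ (equivalently $X_{\mc{O}}^+$ and $X_{\mc{O}}^-$) have the \emph{same} associated variety $\overline{\mc{O}}$, so the criterion is not even well-defined. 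Second, and more importantly, knowing the associated variety (or even the associated cycle) of an irreducible $(\mf{g}_{\bb{C}},K_{\bb{C}})$-module controls its $K_{\bb{C}}$-type multiplicities only asymptotically; it does not yield the exact equality of $K_{\bb{C}}$-modules asserted in the Proposition. Your Borel--Weil--Bott computation merely re-derives the $G$-module structure of $R(\mc{O})$ and $R(\tilde{\mc{O}})\ominus R(\mc{O})$, which is already known (it is the result of \cite{Cos} quoted right after the Proposition); equating that with the $K_{\bb{C}}$-types of $\theta(\mathrm{triv})$ and $\theta(\det)$ is precisely what the Proposition (and Corollary \ref{cor:roequalstheta}, Theorem A) is asserting, so as written the argument is circular. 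The alternative route you mention (expanding the Langlands quotients of Theorem \ref{thm:AB} in standard modules) likewise only renames the difficulty.

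For comparison, the paper closes this step by invoking explicit theta-correspondence computations of Lee--Zhu: the decomposition of $\theta(\mathrm{triv})$ is taken directly from Theorem 2.4 and Section 4 of \cite{LZ3}, while for $\theta(\det)$ the multiplicity-at-most-one bound on the possible $K$-types comes from Theorem 2.2 of \cite{LZ3} (via the harmonics of \cite{KV}), and the occurrence of \emph{all} the types $V_{(2m_1+1,\dots,2m_{2p}+1,0,\dots,0)}$ is proved by realizing $\theta(\det)$ as the closure of the cyclic module generated by the determinant-type distribution $\partial_{2p}\delta$ on $M_{2p\times 2n}(\bb{C})$ and using the nonvanishing pairings of \cite{LZ1}, \cite{LZ2}. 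Some input of this explicit kind (or an independent proof of lowest-$K$-type and multiplicity data for the two lifts) is what your proposal is missing.
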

\begin{proof}
The first equality is given precisely by Theorem 2.4 and Section 4 of \cite{LZ3}. For the second equality, Theorem 2.2 of \cite{LZ3} gives the possibilities of the $K_{\bb{C}}$-types appearing in $\theta(\det)$ by studying the space of $K$-harmonics $\mc{H}(K)$ (\cite{KV}). One can conclude that the $K$-types of $\theta(\det)$ must be of the form $V_{(2m_1+1, 2m_2+1, \dots, 2m_{2p}+1, 0, \dots, 0)}$ as in the Proposition, and such $K$-types appear in $\theta(\det)$ with multiplicity at most one.\\

It remains to show that all such $K$-types appear in $\theta(\det)$. Using the notations in \cite{LZ1}, \cite{LZ2}, we need to study the distribution in $\mc{X} := M_{2p \times 2n}(\bb{C})$ given by
$$\partial_{2p} \delta := \det\left( \begin{array}{ccc}
\frac{\partial}{\partial z_{1,1}} & \dots & \frac{\partial}{\partial z_{1,2p}} \\
 & \ddots &  \\
\frac{\partial}{\partial z_{2p,1}} & \dots & \frac{\partial}{\partial z_{2p,2p}} \end{array} \right)\delta,$$
where $\delta$ is the Dirac distribution at the origin of $\mc{X}$. Then for any $h \in O(2p,\bb{C})$,
$$h \cdot \partial_{2p} \delta = \det(h) \partial_{2p} \delta.$$
Using Theorem 2.2(b) in \cite{LZ2} (which relied on Theorem 3.4 of \cite{LZ1}), the inner product
$$\langle \partial_{2p} \delta, w_{2m_1 +1, \dots, 2m_{2p}+1, 0, \dots, 0} \rangle \neq 0,$$
for the highest weight vector $w_{2m_1 +1, \dots, 2m_{2p}+1, 0, \dots, 0}$ of the $U(2n)$-type $(2m_1 +1, \dots, 2m_{2p}+1, 0, \dots, 0)$. Upon restricting our attention to the subgroup $K = Sp(2n) \subset U(2n)$, it generates an irreducible representation $V_{(2m_1 +1, \dots, 2m_{2p}+1, 0, \dots, 0)}$ in $K$. Hence the $K$-type $V_{(2m_1 +1, \dots, 2m_{2p}+1, 0, \dots, 0)}$ does appear in $K \cdot \partial_{2p} \delta$ (with multiplicity one). From there, we can copy the statement of Theorem 2 in \cite{LZ3} and conclude that there is a $(\mf{g}_{\bb{C}}, K_{\bb{C}})$-module isomorphism
$$\theta(\det) \cong \overline{\langle G \cdot \partial_{2p}\delta \rangle},$$
the closure of the span of $G \cdot \partial_{2p} \delta$ in the Frechet topology of the space of distributions $\mc{S}^*(\mc{X})$. In particular, every $K$-type of the form $V_{(2m_1 +1, \dots, 2m_{2p}+1, 0, \dots, 0)}$ does appear in $\theta(\det)$. So the Proposition is proved.
\end{proof}
On the other hand, we want to know the decomposition of $R(\mc{O})$ as $G$-modules. In fact, nilpotent orbits of the form $\mc{O} = (2^{l}1^{2q})$ are called \textit{spherical orbits}, having the property that a Borel subgroup $B$ of $G$ has a dense orbit in $\mc{O}$. These orbits have rich geometric and combinatoric structures. The classification of such orbits are given by Panyushev in \cite{Pan}. In particular, when $l = 2p$ is even, then $\mc{O} = (2^{2p}1^{2q})$ is a \textit{special} spherical orbit. The representations attached to such $\mc{O}$ are called \textit{special unipotent representations} in the sense of \cite{BV 1985}. We will study such representations in detail in the next Section.\\

The structure of $R(\mc{O})$ along with its universal cover $R(\tilde{\mc{O}})$ were given in \cite{Cos}.
\begin{proposition}[\cite{Cos}, Proposition 4.7, 4.8]
Let $\mc{O} = (2^l1^{2q})$ be a spherical nilpotent orbit in $G = Sp(2n,\bb{C})$. Then
\begin{align*}
R(\mc{O}) &\cong \bigoplus_{\substack{m_i \in \bb{N}\cup\{0\}\\ m_1 \geq m_2 \geq \dots \geq m_l}} V_{(2m_1, 2m_2, \dots, 2m_l, 0, \dots, 0)},\\
R(\tilde{\mc{O}}) &\cong R(\mc{O}) \oplus \bigoplus_{\substack{m_i \in \bb{N}\cup\{0\}\\ m_1 \geq m_2 \geq \dots \geq m_l}} V_{(2m_1+1, 2m_2+1, \dots, 2m_l+1, 0, \dots, 0)}.
\end{align*}
\end{proposition}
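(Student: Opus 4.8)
The plan is to realize $\overline{\mc O}$ through an explicit $G$-equivariant resolution and then read off the coordinate ring by Borel--Weil. First I would record the linear-algebra model: $X\in\overline{\mc O}$ precisely when $X^2=0$ and $\operatorname{rank}X\le l$, and for such $X$ the image $\operatorname{Im}X$ is isotropic with $\ker X=(\operatorname{Im}X)^\perp$, so --- identifying $\bb C^{2n}/(\operatorname{Im}X)^\perp\cong(\operatorname{Im}X)^*$ via the symplectic form --- the element $X$ is the same datum as a (possibly degenerate) symmetric form $\beta_X\in S^2(\operatorname{Im}X)$, which is nondegenerate exactly when $X\in\mc O$. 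Hence, writing $\mathrm{OGr}:=\mathrm{OGr}(l,2n)=G/P$ for the isotropic $l$-plane Grassmannian, $\mathcal U$ for its tautological rank-$l$ subbundle and $\rho\colon E:=\mathrm{Tot}(S^2\mathcal U)\to\mathrm{OGr}$ for the associated vector bundle, one has
$$E=\{(X,U)\in\mf g\times\mathrm{OGr}\ :\ \operatorname{Im}X\subseteq U,\ U^\perp\subseteq\ker X\},$$
which is smooth, and the first projection $\pi\colon E\to\overline{\mc O}$ is proper, surjective (its image is closed, irreducible, and contains the dense orbit $\mc O$) and birational (over $\mc O$ necessarily $U=\operatorname{Im}X$).

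Granting that $\overline{\mc O}$ is normal --- known for these spherical orbits by work of Panyushev (cf.\ also Kraft--Procesi for classical orbit closures) --- Zariski's main theorem gives $\pi_*\mathcal O_E=\mathcal O_{\overline{\mc O}}$, so
$$R(\overline{\mc O})=\Gamma(E,\mathcal O_E)=\bigoplus_{d\ge0}\Gamma\bigl(\mathrm{OGr},\operatorname{Sym}^d(S^2\mathcal U^\vee)\bigr).$$
Now the classical plethysm $\operatorname{Sym}^d(S^2W)=\bigoplus_{\mu\vdash d}\mathbb S_{2\mu}W$ (with $2\mu=(2\mu_1,2\mu_2,\dots)$) together with Borel--Weil--Bott on $G/P$ concludes the argument: $\mathbb S_{2\mu}(\mathcal U^\vee)$ is the homogeneous bundle attached to the irreducible $P$-representation of highest weight $(2\mu_1,\dots,2\mu_l,0,\dots,0)$, which is $G$-dominant, so $\Gamma(\mathrm{OGr},\mathbb S_{2\mu}(\mathcal U^\vee))=V_{(2\mu_1,\dots,2\mu_l,0,\dots,0)}$ with vanishing higher cohomology. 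Summing over all $\mu$ with $\ell(\mu)\le l$ and noting $R(\mc O)=R(\overline{\mc O})$ --- the boundary $\overline{\mc O}\setminus\mc O$ is a union of smaller nilpotent orbits, of even hence $\ge2$ codimension in the normal variety $\overline{\mc O}$ --- yields the first formula.

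For $\tilde{\mc O}$: as $(2^l1^{2q})$ has exactly one even part, $\pi_1(\mc O)\cong\bb Z/2$, so $\tilde{\mc O}\to\mc O$ is the unique connected double cover, and on the model it is ``a choice of square root of $\det\beta_X$''. Concretely I would set $\tilde E:=\mathrm{Spec}_E(\mathcal O_E\oplus\rho^*\det\mathcal U^\vee)$, with algebra structure given by the tautological section $\det\beta\in\Gamma(E,\rho^*(\det\mathcal U)^{\otimes2})$; this is the double cover of $E$ branched along the reduced divisor $\{\det\beta=0\}=E\setminus\pi^{-1}(\mc O)$, hence normal, and it is connected because $\rho^*\det\mathcal U^\vee$ is a non-torsion class in $\operatorname{Pic}(E)\cong\operatorname{Pic}(\mathrm{OGr})\cong\bb Z$. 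Over $\pi^{-1}(\mc O)\cong\mc O$ it is \'etale of degree $2$, so by uniqueness of the connected double cover it restricts to $\tilde{\mc O}$. Rerunning the argument for $\tilde E$ gives $R(\tilde{\mc O})=\Gamma(\tilde E,\mathcal O_{\tilde E})=\Gamma(E,\mathcal O_E)\oplus\Gamma(E,\rho^*\det\mathcal U^\vee)$, and since $\mathbb S_{2\mu}(\mathcal U^\vee)\otimes\det\mathcal U^\vee=\mathbb S_{2\mu+(1^l)}(\mathcal U^\vee)$ has $H^0$ equal to $V_{(2\mu_1+1,\dots,2\mu_l+1,0,\dots,0)}$ (again $G$-dominant), the second summand is exactly $\bigoplus_{m_1\ge\cdots\ge m_l\ge0}V_{(2m_1+1,\dots,2m_l+1,0,\dots,0)}$, as claimed.

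The main obstacle is the normality of $\overline{\mc O}$ (and, correspondingly, of its double cover): absent normality one only obtains $R(E)\subseteq R(\overline{\mc O})\subseteq R(\mc O)$ and the Borel--Weil computation does not pin $R(\overline{\mc O})$ down. Once normality is granted --- as it is for $(2^l1^{2q})$ --- the remainder is bookkeeping with resolutions and Zariski's main theorem, the one genuinely delicate point being to confirm that $\tilde E$ is the connected (universal) double cover rather than a split one, which is why I would isolate the observation that $\rho^*\det\mathcal U^\vee$ is non-torsion in $\operatorname{Pic}(E)$.
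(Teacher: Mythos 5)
Your route is genuinely different from the paper's, which does not prove this statement at all but simply quotes Costantini \cite{Cos}; your argument, by contrast, is the Kraft--Procesi/Hesselink style collapsing of the bundle $S^2\mathcal{U}$ over the isotropic Grassmannian, combined with the plethysm $\mathrm{Sym}(S^2W)\cong\bigoplus_{\mu}\mathbb{S}_{2\mu}W$ and Borel--Weil, and a branched double cover $\tilde E$ for the universal cover. That skeleton is correct: $E\to\overline{\mc O}$ is proper, surjective and birational as you say, $\Gamma(E,\mc O_E)=\bigoplus_{\mu}V_{(2\mu,0,\dots,0)}$, and twisting by $\det\mathcal{U}^\vee$ produces exactly the odd summands (the case $l=n=1$ reproduces $\bb{C}[\mathcal{N}_{\mf{sl}_2}]$ and $\bb{C}[\bb{C}^2\setminus\{0\}]$, so the Borel--Weil conventions check out); the higher-cohomology vanishing you invoke is not even needed.

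The genuine weak point is the normality premise, which you yourself flag as ``the main obstacle.'' Normality of $\overline{\mc O}$ for these orbits is not proved in Panyushev's paper, and it cannot be waved through ``Kraft--Procesi for classical orbit closures'': outside type $A$, nilpotent orbit closures are not normal in general, and deciding normality for precisely these symplectic orbit closures is a delicate question in its own right (it is the subject of \cite{W1}-type work), so as written your proof rests on an unestablished hypothesis. Your fallback inequality is also reversed: since $E$ is smooth and $\pi$ is proper birational, one has $R(\overline{\mc O})\subseteq\Gamma(E,\mc O_E)$, not $\Gamma(E,\mc O_E)\subseteq R(\overline{\mc O})$. The good news is that the hypothesis is unnecessary. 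Stein factorization of $\pi$ identifies $\operatorname{Spec}\Gamma(E,\mc O_E)$ with the normalization $N$ of $\overline{\mc O}$ (it is finite and birational over $\overline{\mc O}$ and normal because $E$ is); $N$ contains $\mc O$ as an open subset whose complement has codimension at least $2$, so Hartogs on the normal variety $N$ gives $\Gamma(E,\mc O_E)=R(N)=R(\mc O)$ directly, with no statement about $\overline{\mc O}$ itself. The same argument applied to $\tilde E$ (which is normal since the branch divisor $\{\det\beta=0\}$ is reduced, and irreducible since $\det\beta$ vanishes to order one there, hence is not a square -- your Picard-group argument also works) yields $\Gamma(\tilde E,\mc O_{\tilde E})=R(\tilde{\mc O})$. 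With that repair, and the observation that $\pi_1(\mc O)\cong\bb{Z}/2\bb{Z}$ identifies the restriction of $\tilde E$ over $\mc O$ with $\tilde{\mc O}$, your proof is complete and self-contained, whereas the paper's citation of \cite{Cos} appeals to coordinate rings of spherical conjugacy classes computed by entirely different methods.
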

Combining the results in Proposition \ref{prop:LZ}, we get
\begin{corollary} \label{cor:roequalstheta}
Let $G = Sp(2n,\bb{C})$ and $\mc{O} = (2^{2p}1^{2q})$. Then as $G$-modules,
$$R(\mc{O}) \cong \theta(\mathrm{triv}),\ \ \ \ \ R(\tilde{\mc{O}}) \cong \theta(\mathrm{triv}) \oplus \theta(\det).$$
\end{corollary}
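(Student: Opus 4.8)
The plan is to obtain Corollary \ref{cor:roequalstheta} by simply juxtaposing the two immediately preceding results. First I would record the bookkeeping: since $\mc{O} = (2^{2p}1^{2q})$ is a partition of $2n$, we have $4p + 2q = 2n$, i.e. $n = 2p+q$, which is exactly the constraint under which Theorem \ref{thm:AB} and Proposition \ref{prop:LZ} are stated. In particular the relevant dual pair is $(O(2p,\bb{C}), Sp(2n,\bb{C}))$, and the highest weights appearing in Proposition \ref{prop:LZ} have at most $2p$ nonzero parts, with the remaining $n - 2p = q$ coordinates equal to $0$. Note also that $l = 2p$ is even, so that the spherical orbit $\mc{O}$ is special and the $\theta$-correspondence side of the comparison makes sense.

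Next I would invoke the cited \cite{Cos} Proposition with $l = 2p$: it gives $R(\mc{O}) \cong \bigoplus V_{(2m_1, 2m_2, \dots, 2m_{2p}, 0, \dots, 0)}$, the sum over all $m_1 \geq m_2 \geq \dots \geq m_{2p} \geq 0$ in $\bb{N} \cup \{0\}$, and $R(\tilde{\mc{O}}) \cong R(\mc{O}) \oplus \bigoplus V_{(2m_1+1, 2m_2+1, \dots, 2m_{2p}+1, 0, \dots, 0)}$ over the same index set. On the other side, Proposition \ref{prop:LZ} expresses $\theta(\mathrm{triv})$ and $\theta(\det)$, as $K_{\bb{C}}$-modules, as precisely these same two families of $Sp(2n,\bb{C})$-representations. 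Using the identification $K_{\bb{C}} \cong G$ fixed in the Notations, the two even-highest-weight families agree term by term, yielding $R(\mc{O}) \cong \theta(\mathrm{triv})$; adjoining the odd-highest-weight family, which is exactly $\theta(\det)$ by Proposition \ref{prop:LZ}, gives $R(\tilde{\mc{O}}) \cong \theta(\mathrm{triv}) \oplus \theta(\det)$.

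All of the genuine content is already contained in Proposition \ref{prop:LZ} and in \cite{Cos}, so there is no real obstacle here; the proof is a direct matching of $K_{\bb{C}}$-type multiplicities. The only point worth emphasizing is that the asserted isomorphisms are of $G \cong K_{\bb{C}}$-modules only: no compatibility with the full $(\mf{g}_{\bb{C}}, K_{\bb{C}})$-module structure is claimed or needed at this stage (indeed $R(\mc{O})$ and $R(\tilde{\mc{O}})$ are here regarded purely as $G$-modules), so it genuinely suffices to compare the two $K_{\bb{C}}$-type decompositions, which are literally identical.
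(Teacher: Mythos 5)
Your proposal is correct and matches the paper's own (implicit) argument: the corollary is obtained exactly by juxtaposing Proposition \ref{prop:LZ} with the decomposition of $R(\mc{O})$ and $R(\tilde{\mc{O}})$ from \cite{Cos} for $l=2p$ and matching the identical $K_{\bb{C}}\cong G$-type families term by term. The bookkeeping $n=2p+q$ and the remark that only $G$-module (not $(\mf{g}_{\bb{C}},K_{\bb{C}})$-module) structure is claimed are both consistent with the paper.
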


\section{Unipotent Representations}
In this section, we will describe precisely how we can construct unipotent representations from a classical special nilpotent orbit $\mc{O}$ with its dual ${}^L\mc{O}$ being an even orbit.
\begin{algorithm} \label{alg:specialunipotent}
Let $\mc{O}$ be a classical special nilpotent orbit such that its Spaltenstein dual ${}^L\mc{O}$ being an even orbit. Then the special unipotent representations attached to $\mc{O}$ can be constructed as follows:\\

\noindent \textbf{Step (1)} Determine $\lambda_{\mc{O}}$: take the Spaltenstein dual ${}^L\mc{O}$ of $\mc{O}$, and consider the semisimple part of the Jacobson-Morozov triple ${}^Lh$ of ${}^L\mc{O}$. Then $\lambda_{\mc{O}} := \frac{1}{2}{}^L{h}$.\\

\noindent \textbf{Step (2)} In \cite{Lu1} or Proposition 5.28 of \cite{BV 1985}, Lusztig gives a bijection between $\bar{A}(\mc{O})$ and a certain left cell of irreducible $W$-representations denoted as $V^L(w_0w_{\mc{O}})$ in \cite{BV 1985}. Write the bijection as
$$x \in \bar{A}(\mc{O}) \stackrel{1:1}\longleftrightarrow \sigma_x \in \hat{W}.$$
(here we used the fact that if $G$ is classical, $A(\mc{O})$ and $\overline{A}(\mc{O})$ are copies of $\bb{Z}/2\bb{Z}$, hence every conjugacy class of these groups is a singleton)\\

\noindent \textbf{Step (3)} Define $R_x$ as in \cite{BV 1985} by the $\sigma_x$-isotypic projection
 $$R_x := \frac{1}{|W_{\lambda_{\mc{O}}}|} \sum_{w \in W} tr(\sigma_x(w))X\left(
\begin{array}{cccccccccccc}
 \lambda_{\mc{O}}\\
 w\lambda_{\mc{O}} \\
\end{array}
\right).$$
Using \cite[Proposition 6.6]{BV 1985}, if $\sigma_x$ can be obtained as a truncated induction (see \cite[Chapter 11]{Car}) from the sign representation of a Weyl subgroup $W' \leq W$, i.e. $\sigma_x = j_{W'}^{W}(\mathrm{sgn})$, then we can reduce this formula into
 $$R_x := \sum_{w' \in W'} (-1)^{l(w')}X\left(
\begin{array}{cccccccccccc}
 \lambda_{\mc{O}}\\
 w'\lambda_{\mc{O}} \\
\end{array}
\right).$$

\noindent \textbf{Step (4)} Consequently, every unipotent representation corresponding to $\mc{O}$ is parametrized by $\pi \in \overline{A}(\mc{O})^{\wedge}$, and has the character formula
$$X_{\pi} := \frac{1}{|\overline{A}(\mc{O})|} \sum_{x \in \overline{A}(\mc{O})} tr(\pi(x)) R_x.$$
\end{algorithm}

\begin{example}
{\rm We now study the special unipotent representations attached to the orbit $\mc{O} = (2^{2p}1^{2q})$:\\
\noindent -- The Lusztig-Spaltenstein dual is given by ${}^L\mc{O} = (2p+2q+1, 2p-1, 1)$. Hence $$\lambda_{\mc{O}} = \frac{1}{2}\ ^Lh = (p+q, p+q-1, \dots, p, (p-1)^2, \dots, 2^2, 1^2, 0).$$

\noindent -- In \cite{Lu2}, Lusztig defined an injection
 $$\gamma(\mc{O}) := \{\mc{O}' \subseteq \overline{\mc{O}} | \mc{O}' \nsubseteq \overline{\mc{O}}_{spec}\ \text{for any other special orbit } \mc{O}_{spec} \subsetneq \mc{O} \} \hookrightarrow \overline{A}(\mc{O}).$$
\noindent Consider the following composition of maps:
$$\mc{O}' \in \gamma(\mc{O}) \hookrightarrow x(\mc{O}') \in \overline{A}(\mc{O}) \stackrel{Step (2)}\mapsto \sigma_{x(\mc{O}')} \in \hat{W}.$$
Then $\sigma_{x(\mc{O}')} = sp(\mc{O}')$, the Springer representation of $\mc{O}'$.\\
For $\mc{O} = (2^{2p}1^{2q})$, $\gamma(\mc{O}) = \{ \mc{O}, \mc{O}' \}$, where $\mc{O}' = (2^{2p-1}1^{2q+2})$, and $A(\mc{O}) = \overline{A}(\mc{O}) = \bb{Z}/2\bb{Z} = \{e, s\}$ with $e$ being the identity element. So the injection above is indeed a bijection, and
$$\mc{O} \leftrightarrow e\ ;\ \ \mc{O}' \leftrightarrow s.$$
According to the algorithm of computing Springer representations given in Section 7 of \cite{S1},
$$\sigma_e = (1^{p}, 1^{p+q} ) = j_{D_p \times C_{p+q}}^{C_n} (\mathrm{sgn}),\ \ \sigma_s = (1^{p+q+1}, 1^{p-1}) = j_{D_{p+q+1} \times C_{p-1}}^{C_n} (\mathrm{sgn}).$$

\noindent -- The two reduced formula is of the form
\begin{align} \label{eq:Re}
R_e = \sum_{w \in W(D_p \times C_{p+q})}(-1)^{l(w)} X\left(
\begin{array}{cccccccccc}
&p-1, \dots, 1, 0; & p+q, \dots, 2,1 \\
w(&p-1, \dots, 1, 0; & p+q, \dots, 2,1)\end{array}
\right),
\end{align}
\begin{align} \label{eq:Rs}
R_s = \sum_{w' \in W(D_{p+q+1} \times C_{p-1})}(-1)^{l(w')} X\left(
\begin{array}{cccccccccc}
&p+q, \dots, 1, 0; & p-1, \dots, 2,1 \\
w'(&p+q, \dots, 1, 0; & p-1, \dots, 2,1)\end{array}
\right).
\end{align}
(Here we have used Remark (a) after Proposition \ref{prop:BVprop}.)\\

\noindent -- The two special unipotent representations are of the form
$$X_{\mc{O}}^+ = \frac{1}{2}(R_e + R_s),\ \ X_{\mc{O}}^- = \frac{1}{2}(R_e - R_s).$$
}\end{example}

We now link the special unipotent representations obtained in the previous example with $\theta(\mathrm{triv})$, $\theta(\det)$.
\begin{proposition} \label{prop:unipotenttheta}
As $(\mf{g}_{\bb{C}}, K_{\bb{C}})$-modules,
$$X_{\mc{O}}^+ \cong \theta(\mathrm{triv}),\ \ X_{\mc{O}}^- \cong \theta(\det)$$
\end{proposition}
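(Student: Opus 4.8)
The plan is to identify the two special unipotent representations $X_{\mc{O}}^{\pm}$ with $\theta(\mathrm{triv})$ and $\theta(\det)$ by comparing infinitesimal characters and $K_{\bb{C}}$-type decompositions. First I would note that $\theta(\mathrm{triv})$ and $\theta(\det)$ both have infinitesimal character $\lambda_{\mc{O}} = (p+q, p+q-1, \dots, p, (p-1)^2, \dots, 1^2, 0)$: this can be read off from the explicit Langlands parameters in Theorem \ref{thm:AB}, since for the first parameter $\lambda_1 = (p+q, \dots, 1; p-1, \dots, 0)$ is $W$-conjugate to $\lambda_{\mc{O}}$, and similarly for $\theta(\det)$. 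Thus all four modules $\theta(\mathrm{triv})$, $\theta(\det)$, $X_{\mc{O}}^+$, $X_{\mc{O}}^-$ live in the same infinitesimal-character block. By Remark (1) after Theorem \ref{thm:AB}, $\theta(\mathrm{triv})$ and $\theta(\det)$ are precisely the unipotent representations attached to the maximal primitive ideal $I$ with this infinitesimal character, so they must coincide, as virtual modules, with a combination of $X_{\mc{O}}^+$ and $X_{\mc{O}}^-$; and since by \cite{BV 1985} the $X_\pi$ exhaust the unipotent representations with that property, the issue is just to match them up correctly.

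Next I would carry out the $K_{\bb{C}}$-type comparison. Corollary \ref{cor:roequalstheta} together with the preceding Proposition gives $\theta(\mathrm{triv}) \cong R(\mc{O}) = \bigoplus V_{(2m_1, \dots, 2m_{2p}, 0, \dots)}$ and $\theta(\det) \cong \bigoplus V_{(2m_1+1, \dots, 2m_{2p}+1, 0, \dots)}$. Since $X_{\mc{O}}^+$ is spherical (it contains the trivial $K_{\bb{C}}$-type, corresponding to $\mc{O}$ itself via the bijection $\mc{O} \leftrightarrow e$), while $X_{\mc{O}}^-$ is non-spherical, the only possible matching is $X_{\mc{O}}^+ \leftrightarrow \theta(\mathrm{triv})$ and $X_{\mc{O}}^- \leftrightarrow \theta(\det)$. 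To make this rigorous I would compute the $K_{\bb{C}}$-types of $X_{\mc{O}}^{\pm} = \frac12(R_e \pm R_s)$ directly from Equations \eqref{eq:Re}, \eqref{eq:Rs}: each $X\left(\begin{array}{c}\lambda_1\\ \lambda_2\end{array}\right)$ contributes $Ind_T^G(\lambda_1 - \lambda_2)$ to the $K_{\bb{C}}$-type formula, so $R_e = \sum_{w \in W(D_p \times C_{p+q})} (-1)^{l(w)} Ind_T^G\big((\rho_{D_p}; \rho_{C_{p+q}}) - w(\rho_{D_p};\rho_{C_{p+q}})\big)$ where I abbreviate the two blocks, and similarly for $R_s$. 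Using the standard fact that $\sum_{w \in W'} (-1)^{l(w)} Ind_T^G(\mu - w\mu)$ is (up to known combinatorics) the $G$-module whose multiplicities are governed by the Weyl character formula applied within $W'$, one identifies $R_e$ and $R_s$ as explicit sums of $V_\lambda$'s; adding and subtracting and dividing by $2$ should reproduce exactly the two families of highest weights above.

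The main obstacle I expect is the last computation: showing that $\frac12(R_e \pm R_s)$, expanded as virtual sums of $Ind_T^G(\lambda_1 - \lambda_2)$ over the two Weyl subgroups and then decomposed into irreducibles $V_\lambda$, yields precisely $\bigoplus V_{(2m_1,\dots,2m_{2p},0,\dots)}$ and $\bigoplus V_{(2m_1+1,\dots,2m_{2p}+1,0,\dots)}$ respectively — this is a genuine multiplicity bookkeeping problem of the same flavor as the computations in \cite{BV 1985}, and one must verify that all cancellations occur as predicted so that $R_e$ and $R_s$ are honest (not merely virtual) modules. A cleaner route that sidesteps most of this: invoke Proposition \ref{prop:BVprop} and the unipotent classification to know \emph{a priori} that $\{X_{\mc{O}}^+, X_{\mc{O}}^-\} = \{\theta(\mathrm{triv}), \theta(\det)\}$ as sets, then separate the two cases purely by the spherical/non-spherical dichotomy — $X_{\mc{O}}^+$ contains the trivial $K_{\bb{C}}$-type because $R_e$ does (the $w = e$ term contributes $Ind_T^G(0) \ni V_0$ and no other term in $R_e$ or $R_s$ can cancel it by a dominance argument), whereas every $K_{\bb{C}}$-type of $\theta(\det)$ has all-odd leading coordinates. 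This reduces the proof to a short argument about the lowest $K_{\bb{C}}$-type, which is the version I would ultimately write up.
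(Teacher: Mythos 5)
Your identification of $X_{\mc{O}}^+$ with $\theta(\mathrm{triv})$ is essentially sound and close to the paper's own argument: the trivial $K_{\bb{C}}$-type occurs exactly once in $R_e$ (only the $w=e$ term gives $Ind_T^G(0)$, since the parameter is regular for $W(D_p\times C_{p+q})$) and once in $R_s$, hence with multiplicity one in $X_{\mc{O}}^+$; since $X_{\mc{O}}^+$ is irreducible with infinitesimal character $(\lambda_{\mc{O}},\lambda_{\mc{O}})$, it must be the spherical quotient $L\left(\begin{array}{c}\lambda_{\mc{O}}\\ \lambda_{\mc{O}}\end{array}\right)$, which Theorem \ref{thm:AB} identifies with $\theta(\mathrm{triv})$.

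The gap is in the half $X_{\mc{O}}^-\cong\theta(\det)$. Your "cleaner route" rests on the set equality $\{X_{\mc{O}}^+,X_{\mc{O}}^-\}=\{\theta(\mathrm{triv}),\theta(\det)\}$, which requires knowing \emph{a priori} that $\theta(\det)$ is a special unipotent representation attached to $\mc{O}$, i.e.\ that its annihilator is the maximal primitive ideal at infinitesimal character $\lambda_{\mc{O}}$. The only support you offer is Remark (1) after Theorem \ref{thm:AB}, but in the paper that remark is justified by pointing forward to precisely the proposition you are proving, so as written the argument is circular; no independent proof (or citation to \cite{AB}) that $\mathrm{Ann}\,\theta(\det)$ is maximal is supplied. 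Without that input, the spherical/non-spherical dichotomy pins down $X_{\mc{O}}^+$ but gives no reason why $X_{\mc{O}}^-$, rather than some other non-spherical module $L\left(\begin{array}{c}\lambda_{\mc{O}}\\ w\lambda_{\mc{O}}\end{array}\right)$, should coincide with $\theta(\det)$. The paper fills exactly this hole by a direct computation with the character formulas: the principal series $X\left(\begin{array}{c}\lambda_{\mc{O}}\\ w_p\lambda_{\mc{O}}\end{array}\right)$ occurs in $\frac{1}{2}(R_e-R_s)$ with coefficient $1$, while all terms with $\lambda_{\mc{O}}-w\lambda_{\mc{O}}\sim(1^{2i}0^{n-2i})$, $i<p$, cancel; hence the lowest $K_{\bb{C}}$-type of $X_{\mc{O}}^-$ is $(1^{2p}0^q)$, forcing $X_{\mc{O}}^-\cong L\left(\begin{array}{c}\lambda_{\mc{O}}\\ w_p\lambda_{\mc{O}}\end{array}\right)$, which Theorem \ref{thm:AB} identifies with $\theta(\det)$. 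That computation (or an independent proof of the unipotency of $\theta(\det)$) is what your write-up is missing. A secondary caution: your first, "bookkeeping" route would at best establish an isomorphism of $K_{\bb{C}}$-modules, not of $(\mf{g}_{\bb{C}},K_{\bb{C}})$-modules as the proposition asserts (irreducible modules are not determined by their $K$-spectrum alone); note the paper deduces the $K$-spectrum coincidence (Corollary \ref{cor:rounipotent}) \emph{from} this proposition together with Corollary \ref{cor:roequalstheta}, not conversely.
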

\begin{proof}
The proposition can be proved directly by tracing along the lines of Corollary 5.24 in \cite{BV 1985} and Theorem \ref{thm:AB}. We present another proof here. Note that the infinitesimal character of $X_{\mc{O}}^+$ and $X_{\mc{O}}^-$ are both ($W \times W$-conjugacy class of) $(\lambda_{\mc{O}}, \lambda_{\mc{O}})$. By Proposition \ref{prop:BVprop}, they must be of the form $L \left(
\begin{array}{cccccccccc}
\lambda_{\mc{O}} \\
w\lambda_{\mc{O}} \\
\end{array}
\right)$ for some $w \in W$. For $X_{\mc{O}}^+$,
$$X_{\mc{O}}^+ \cong \frac{1}{2}(R_e + R_s) = \frac{1}{2}[(X\left(
\begin{array}{cccccccccc}
\lambda_{\mc{O}} \\
\lambda_{\mc{O}}\end{array}
\right) + \dots ) + (X\left(
\begin{array}{cccccccccc}
\lambda_{\mc{O}} \\
\lambda_{\mc{O}}\end{array}
\right) + \dots )] = X\left(
\begin{array}{cccccccccc}
\lambda_{\mc{O}} \\
\lambda_{\mc{O}}\end{array}
\right) + \dots,$$
where the remaining terms are of the form $X\left(
\begin{array}{cccccccccc}
\lambda_{\mc{O}} \\
w\lambda_{\mc{O}}\end{array}
\right)$, $l(w) > 0$. Hence its lowest $K_{\bb{C}}$-type is $\lambda_{\mc{O}} - \lambda_{\mc{O}} = (0^n)$. On the other hand, the only $L \left(
\begin{array}{cccccccccc}
\lambda_{\mc{O}} \\
w\lambda_{\mc{O}} \\
\end{array}
\right)$ having lowest $K_{\bb{C}}$-type $(0^n)$ is $L \left(
\begin{array}{cccccccccc}
\lambda_{\mc{O}} \\
\lambda_{\mc{O}} \\
\end{array}
\right)$. By (Remark (b) after) Proposition \ref{prop:BVprop} and Theorem \ref{thm:AB},
$$X_{\mc{O}}^+ \cong L \left(
\begin{array}{cccccccccc}
\lambda_{\mc{O}} \\
\lambda_{\mc{O}} \\
\end{array}
\right) \cong L \left(
\begin{array}{cccccccccc}
p+q, \dots, 2, 1;& p-1, \dots, 1, 0\\
p+q, \dots, 2, 1;& p-1, \dots, 1, 0 \\
\end{array}
\right) \cong \theta(\mathrm{triv}).$$

\noindent For $X_{\mc{O}}^-$, a direct computation shows that the term
$$X\left(
\begin{array}{cccccccccc}
\lambda_{\mc{O}} \\
w_p\lambda_{\mc{O}}\end{array}
\right) := X\left(
\begin{array}{cccccccccc}
p+q, \dots, p+1;& p, p-1 & p-1, p-2 &\dots, &2,1, &1,0  \\
p+q, \dots, p+1;& p-1, p & p-2, p-1 &\dots, &1,2, &0,1 \end{array}
\right)$$
appears in the expression $X_{\mc{O}}^- \cong \frac{1}{2}(R_e - R_s)$ with coefficient 1. Hence $X_{\mc{O}}^-$ must have lowest $K_{\bb{C}}$-type smaller than or equal to $(1^{2p}0^q) \sim (\lambda_{\mc{O}} - w_p\lambda_{\mc{O}})$, that is, the lowest $K_{\bb{C}}$-type must be of the form $(1^{2i}0^{n-2i})$ with $i \leq p$. However, another direct computation shows that all terms of the form
$$\{ X\left(
\begin{array}{cccccccccc}
\lambda_{\mc{O}} \\
w\lambda_{\mc{O}}\end{array}
\right) |w \in W,\ (\lambda_{\mc{O}} - w\lambda_{\mc{O}}) \sim (1^{2i}0^{n-2i}),\ i < p \}$$
do not appear in the expression of $X_{\mc{O}}^-$. Therefore, the lowest $K_{\bb{C}}$-type of $X_{\mc{O}}^-$ must be $(1^{2p}0^q)$.\\

Suppose $X_{\mc{O}}^- \cong L\left(
\begin{array}{cccccccccc}
\lambda_{\mc{O}} \\
w\lambda_{\mc{O}}\end{array}
\right)$ for some $w \in W$, then we have
$$X\left(
\begin{array}{cccccccccc}
\lambda_{\mc{O}} \\
w\lambda_{\mc{O}}\end{array}
\right) = X_{\mc{O}}^- \oplus\ Y = X\left(
\begin{array}{cccccccccc}
\lambda_{\mc{O}} \\
w_p\lambda_{\mc{O}}\end{array}
\right) \oplus (X_{\mc{O}}^- - X\left(
\begin{array}{cccccccccc}
\lambda_{\mc{O}} \\
w_p\lambda_{\mc{O}}\end{array}
\right)) \oplus Y,$$
where $(X_{\mc{O}}^- - X\left(
\begin{array}{cccccccccc}
\lambda_{\mc{O}} \\
w_p\lambda_{\mc{O}}\end{array}
\right))$ and $Y$ are both elements in the Grothendieck group with lowest $K_{\bb{C}}$-type strictly bigger than $(1^{2p}0^q)$. By comparing the first and last expressions of the above equation, one can take $w = w_p$ and hence
$$X_{\mc{O}}^- \cong L\left(
\begin{array}{cccccccccc}
\lambda_{\mc{O}} \\
w_p\lambda_{\mc{O}}\end{array}
\right) = L\left(
\begin{array}{cccccccccc}
p+q, \dots, p+1;& p, p-1 & p-1, p-2 &\dots, &2,1, &1,0  \\
p+q, \dots, p+1;& p-1, p & p-2, p-1 &\dots, &1,2, &0,1 \end{array}
\right).$$
By (Remark (b) after) Proposition \ref{prop:BVprop} and Theorem \ref{thm:AB}, the result follows.
\end{proof}

Combining Proposition \ref{prop:unipotenttheta} and Corollary \ref{cor:roequalstheta}, we have
\begin{corollary} \label{cor:rounipotent} \mbox{}\\
(a)\ As $K_{\bb{C}}$-modules,
\begin{align*}
R(\mc{O}) \cong X_{\mc{O}}^+,\ \ \ \ \ R(\tilde{\mc{O}}) \cong X_{\mc{O}}^+ \oplus X_{\mc{O}}^- = R_e.
\end{align*}
(b) The global character formula of $R(\mc{O})$ and $R(\tilde{\mc{O}})$ are given as:
\begin{align} \label{eq:ro}
R(\mc{O}) \cong \frac{1}{2} \sum_{w \in W(D_p \times C_{p+q})}&(-1)^{l(w)} X\left(
\begin{array}{cccccccccc}
&p-1, \dots, 1, 0; & p+q, \dots, 2,1 \\
w(&p-1, \dots, 1, 0; & p+q, \dots, 2,1)\end{array}
\right)+\\
\frac{1}{2} \sum_{w' \in W(D_{p+q+1} \times C_{p-1})} &(-1)^{l(w')} X\left(
\begin{array}{cccccccccc}
&p+q, \dots, 1, 0; & p-1, \dots, 2,1 \\
w'(&p+q, \dots, 1, 0; & p-1, \dots, 2,1)\end{array}
\right),  \nonumber
\end{align}
\begin{align} \label{eq:rtildeo}
R(\tilde{\mc{O}}) \cong \sum_{w \in W(D_p \times C_{p+q})}(-1)^{l(w)} X\left(
\begin{array}{cccccccccc}
&p-1, \dots, 1, 0; & p+q, \dots, 2,1 \\
w(&p-1, \dots, 1, 0; & p+q, \dots, 2,1)\end{array}
\right).\ \ \ \ \ \ \  \
\end{align}
By taking the $K_{\bb{C}}$-module isomorphism $X\left(
\begin{array}{cccccccccc}
\lambda_1 \\
\lambda_2 \end{array}
\right) \cong Ind_T^G(\lambda_1 - \lambda_2)$, we obtain Theorem B.
\end{corollary}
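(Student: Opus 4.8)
\emph{Proof proposal.} This is a corollary, so the plan is to assemble it from the results already established rather than to prove anything from scratch. For part (a) I would simply concatenate two chains of isomorphisms. Corollary~\ref{cor:roequalstheta} gives $R(\mc{O}) \cong \theta(\mathrm{triv})$ and $R(\tilde{\mc{O}}) \cong \theta(\mathrm{triv}) \oplus \theta(\det)$ as $G$-modules, and Proposition~\ref{prop:unipotenttheta} gives $X_{\mc{O}}^{+} \cong \theta(\mathrm{triv})$ and $X_{\mc{O}}^{-} \cong \theta(\det)$ as $(\mf{g}_{\bb{C}}, K_{\bb{C}})$-modules, hence in particular as $K_{\bb{C}} \cong G$-modules. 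Composing yields $R(\mc{O}) \cong X_{\mc{O}}^{+}$ and $R(\tilde{\mc{O}}) \cong X_{\mc{O}}^{+} \oplus X_{\mc{O}}^{-}$. The remaining identity $X_{\mc{O}}^{+} \oplus X_{\mc{O}}^{-} = R_e$ then follows by adding the defining formulas $X_{\mc{O}}^{\pm} = \tfrac12(R_e \pm R_s)$ from the Example, since $\tfrac12(R_e+R_s) + \tfrac12(R_e-R_s) = R_e$ in the Grothendieck group of $(\mf{g}_{\bb{C}},K_{\bb{C}})$-modules; the direct sum is a genuine one because, by Proposition~\ref{prop:LZ}, $\theta(\mathrm{triv})$ and $\theta(\det)$ have disjoint, multiplicity-free $K_{\bb{C}}$-spectra (even versus odd highest weights in the first $2p$ coordinates). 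As the Corollary only asserts an isomorphism of $K_{\bb{C}}$-modules, nothing beyond this bookkeeping is needed.

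For part (b) I would substitute the explicit expansions of $R_e$ and $R_s$ from \eqref{eq:Re} and \eqref{eq:Rs} into the identities $R(\mc{O}) \cong X_{\mc{O}}^{+} = \tfrac12(R_e + R_s)$ and $R(\tilde{\mc{O}}) \cong R_e$ obtained in part (a); this reproduces \eqref{eq:ro} and \eqref{eq:rtildeo} verbatim. Applying the $K_{\bb{C}}$-type identification $X\binom{\lambda_1}{\lambda_2} \cong Ind_T^G(\lambda_1 - \lambda_2)$ recorded in the Notations section to each standard module appearing turns both formulas into $\bb{Q}$-linear combinations of the $Ind_T^G$'s, which is Theorem~B for $\mc{O}$ and for $\tilde{\mc{O}}$.

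To see that these combinations are genuinely of the shape predicted by Conjecture~\ref{conj:mc}, I would invoke Remark~(a) after Proposition~\ref{prop:BVprop}: the two ``numerators'' $(p-1,\dots,1,0;\,p+q,\dots,2,1)$ and $(p+q,\dots,1,0;\,p-1,\dots,2,1)$ in \eqref{eq:Re}, \eqref{eq:Rs} are each $W$-conjugate to the infinitesimal character $\lambda_{\mc{O}}$, so conjugating both rows of every standard module simultaneously rewrites $R(\mc{O})$ and $R(\tilde{\mc{O}})$ as sums of terms $Ind_T^G(\lambda_{\mc{O}} - w\lambda_{\mc{O}})$ with $w$ running over conjugates in $W_G$ of $W(D_p \times C_{p+q})$ and $W(D_{p+q+1} \times C_{p-1})$ --- both of which sit inside $W_G = W(C_n)$, since $D_p \times C_{p+q}$ and $D_{p+q+1} \times C_{p-1}$ are maximal-rank sub-root-systems of $C_n$. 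Taking $\mu = \lambda_{\mc{O}} = \tfrac12\,{}^Lh$ (consistent with the principal- and trivial-orbit cases listed after Conjecture~\ref{conj:mc}), $W_{\mc{O}}$ the union of these two conjugated reflection subgroups, and $c_w \in \{\pm\tfrac12, \pm1\} \subset \bb{Q}$ the resulting coefficients (after collecting terms with coincident $\lambda_{\mc{O}} - w\lambda_{\mc{O}}$), the formulas are exactly of the form $\sum_{w \in W_{\mc{O}}} c_w\, Ind_T^G(\mu - w \cdot \mu)$ demanded by Conjecture~\ref{conj:mc}.

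I do not expect a genuine obstacle: essentially all the content has been front-loaded into Corollary~\ref{cor:roequalstheta} (the geometry of $R(\mc{O})$, via \cite{Cos} and \cite{LZ3}) and Proposition~\ref{prop:unipotenttheta} (the identification of the dual-pair images with the special unipotent representations $X_{\mc{O}}^{\pm}$), both of which may be taken as given. The only delicate points are matters of interpretation: reading the ``$\oplus$'' in $R(\tilde{\mc{O}}) \cong X_{\mc{O}}^{+} \oplus X_{\mc{O}}^{-}$ at the level of honest $K_{\bb{C}}$-modules (licensed by the multiplicity-one statements of Proposition~\ref{prop:LZ}); allowing $W_{\mc{O}}$ in Conjecture~\ref{conj:mc} to be a union of two conjugated parabolic-type subgroups rather than a single subgroup (which the conjecture permits, as it requires only a subset of $W_G$); and observing that the exponents $(-1)^{l(w)}$ in \eqref{eq:Re}, \eqref{eq:Rs} are values of the sign character of the relevant reflection subgroup rather than of the ambient $W_G$-length --- immaterial here, since the coefficients need only be rational.
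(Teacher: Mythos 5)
Your proposal is correct and follows essentially the same route as the paper: the paper's proof is simply the combination of Corollary \ref{cor:roequalstheta} with Proposition \ref{prop:unipotenttheta}, together with substituting the expansions \eqref{eq:Re}, \eqref{eq:Rs} and applying $X\cong Ind_T^G(\lambda_1-\lambda_2)$ to deduce Theorem B. Your extra bookkeeping (disjoint $K_{\bb{C}}$-spectra for the direct sum, and identifying $\mu=\lambda_{\mc{O}}$ with $W_{\mc{O}}$ a union of two reflection subgroups) is consistent with, and merely makes explicit, what the paper leaves implicit.
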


\section{Lusztig-Vogan Conjecture}
We now see how Equations \eqref{eq:ro} and \eqref{eq:rtildeo} help solve the Conjecture of Achar and Sommers in \cite{AS} for $\mc{O} = (2^{2p}1^{2q})$.\\

\begin{theorem} \label{thm:asproof}
Conjecture \ref{conj:AS} holds for ${}^L\mc{O}$ with $\mc{O} = (2^{2p}1^{2q})$.
\end{theorem}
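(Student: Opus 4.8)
The plan is to read Conjecture \ref{conj:AS} off directly from the global character formulas in Corollary \ref{cor:rounipotent}, so everything reduces to extracting the \emph{maximal} $Ind_T^G$-term of $R(\mc{O})$ and of $R(\tilde{\mc{O}})$. First I would settle the Sommers-side bookkeeping. Since $A(\mc{O}) = \overline A(\mc{O}) \cong \bb{Z}/2\bb{Z}$ the map $\pi$ is an isomorphism, so the trivial class gives $K_C = \{1\}$ and the cover $\mc{O}_C = G/(G^e)^{\circ} = \tilde{\mc{O}}$, whereas the nontrivial class $s$ gives $K_C = A(\mc{O})$ and $\mc{O}_C = \mc{O}$. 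As $\mc{O}$ is special, the Spaltenstein dual ${}^L\mc{O}_0 := d(\mc{O},1) = (2p+2q+1,2p-1,1)$ is special and its canonical preimage is $(\mc{O},1)$; using the classical-type recipe for $d$ one also records when $(\mc{O},s)$ is the canonical preimage of a (then necessarily nonspecial) orbit ${}^L\mc{O}_1 = d(\mc{O},s)$, together with its weighted Dynkin diagram ${}^Lh_1$. So the theorem splits into at most two parallel checks, one attached to the cover $\tilde{\mc{O}}$ and one to the cover $\mc{O}$.

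For the cover $\tilde{\mc{O}}$: Corollary \ref{cor:rounipotent}(a), formula \eqref{eq:rtildeo}, and the $K_{\bb{C}}$-isomorphism $X\!\left(\begin{smallmatrix}\lambda_1\\\lambda_2\end{smallmatrix}\right)\cong Ind_T^G(\lambda_1-\lambda_2)$ give $R(\tilde{\mc{O}}) = \sum_{w\in W'}(-1)^{l(w)}Ind_T^G(\mu-w\mu)$ with $W' = W(D_p\times C_{p+q})$ and $\mu = (p-1,\dots,1,0\,;\,p+q,\dots,2,1)$, which is the half-sum of positive roots of $W'$. Hence $\mu-w\mu$ is a sum of distinct positive roots of $W'$, largest (over $W'$) at the longest element $w_0'$, where $\mu-w_0'\mu = 2\mu = (2(p-1),\dots,2,0\,;\,2(p+q),\dots,4,2)$; rearranging this into its dominant representative gives $(2(p+q),\dots,2p,(2p-2)^2,\dots,2^2,0) = 2\lambda_{\mc{O}} = {}^Lh_0$. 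It then remains to show that (i) no dominant weight strictly larger than $2\lambda_{\mc{O}}$ occurs in the expansion, and (ii) $2\lambda_{\mc{O}}$ occurs with nonzero coefficient. For (ii): $\mu$ is $W'$-regular, so $w_0'$ is the only $w$ with $\overline{\mu-w\mu} = 2\lambda_{\mc{O}}$ and the coefficient is $(-1)^{l(w_0')}\neq 0$. For (i) the clean input is $\|\overline{\mu-w\mu}\| = \|\mu-w\mu\| \le \|\mu-w_0'\mu\| = \|2\lambda_{\mc{O}}\|$ together with the fact that the dominance order strictly increases the norm on dominant weights; but to conclude that $2\lambda_{\mc{O}}$ dominates \emph{every} surviving term — so that the maximal element is actually well defined — one needs a dominance estimate on the weights $\mu-w\mu$ tailored to the block shape of $W'$. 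Granting this, the maximal $Ind_T^G$-term of $R(\tilde{\mc{O}})$ is $2\lambda_{\mc{O}} = {}^Lh_0$, which is Conjecture \ref{conj:AS} for ${}^L\mc{O}_0$.

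For the cover $\mc{O}$ (when $(\mc{O},s)$ is a canonical preimage): $R(\mc{O}) = Ind_{G^e}^G(\mathrm{triv})$, so its maximal $Ind_T^G$-term is exactly $\gamma(\mc{O},\mathrm{triv})$, computed in Theorem C (Propositions \ref{prop:voganmap1}, \ref{prop:voganmap2}) from \eqref{eq:ro}; one then matches that explicit weight with ${}^Lh_1$ by comparing weighted Dynkin diagrams in ${}^L\mf{g}$. I expect the computation behind Theorem C, rather than this final matching, to be the main obstacle: in $R(\mc{O}) = \tfrac12(R_e+R_s)$ the two naive leading terms $\overline{\mu_e-w_0'\mu_e}$ and $\overline{\mu_s-w_0''\mu_s}$ both equal $2\lambda_{\mc{O}}$, but they enter with signs $(-1)^{l(w_0')}$ and $(-1)^{l(w_0'')}$ for the longest elements of $W(D_p\times C_{p+q})$ and $W(D_{p+q+1}\times C_{p-1})$, which agree exactly when $q$ is odd and cancel when $q$ is even; in the cancelling case one must locate the next surviving layer of terms. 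Controlling this cancellation, together with the auxiliary dominance estimate behind (i) above, is where I would put the effort; the combinatorics of the Sommers map $d(\mc{O},s)$ and the $h$-vector comparisons are routine.
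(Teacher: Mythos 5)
Your first branch is essentially the paper's own proof: since ${}^L\mc{O}$ is special its canonical preimage under $d$ is $(\mc{O},1)$, hence $\mc{O}_C=\tilde{\mc{O}}$, and Equation \eqref{eq:rtildeo} evaluated at the longest element of $W(D_p\times C_{p+q})$ produces the maximal term $Ind_T^G(2\lambda_{\mc{O}})=Ind_T^G({}^Lh)$; your norm-plus-regularity argument for non-cancellation of that coefficient and for maximality is if anything more careful than the paper, which (like you) does not spell out a full dominance estimate beyond the norm comparison. The second branch concerning $(\mc{O},s)$ and a putative nonspecial ${}^L\mc{O}_1=d(\mc{O},s)$ is not required by the statement --- Conjecture \ref{conj:AS} refers only to the canonical preimage of the given ${}^L\mc{O}$, which here is $(\mc{O},1)$ --- so leaving it incomplete is harmless, and the paper does not treat it (that material instead feeds into Propositions \ref{prop:voganmap1} and \ref{prop:voganmap2}).
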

\begin{proof}
First of all, we already know from Step (1) of Algorithm \ref{alg:specialunipotent} that ${}^Lh = 2\lambda_{\mc{O}}$. On the other hand, since ${}^L\mc{O}$ is special, the canonical preimage of Sommers' map $d$ is just $(\mc{O}, 1)$, where $1 \in \overline{A}(\mc{O})$ is the trivial element. \\

Also, the map $\pi: A(\mc{O}) \to \overline{A}(\mc{O})$ is identity, since both spaces are equal to $\bb{Z}/2\bb{Z}$. Therefore, $K_C$ is the trivial group in $A(\mc{O})$, and $G_C = (G^e)_0$, the connected component of $G^e$. Hence
$$\mc{O}_C \cong G/G_C = G/(G^e)_0 \cong \tilde{\mc{O}},$$
the universal cover of $\mc{O}$. \\

Now, from Equation \eqref{eq:rtildeo} above, $\lambda_{max}$ is obtained by taking the longest element $w_0$ in both $D_p$ and $C_{p+q}$, so that
\begin{align*}
R(\tilde{\mc{O}}) \cong X_{\mc{O}}^+ \oplus X_{\mc{O}}^- &\cong X\left(
\begin{array}{cccccccccc}
\lambda_{\mc{O}} \\
\lambda_{\mc{O}}\end{array}
\right) + \dots + (-1)^{l(w_0)}X\left(
\begin{array}{cccccccccc}
\lambda_{\mc{O}} \\
-\lambda_{\mc{O}} \end{array}
\right)\\
&\cong Ind_T^G(\lambda_{\mc{O}} - \lambda_{\mc{O}}) + \dots + (-1)^{l(w_0)}Ind_T^G(\lambda_{\mc{O}} - (- \lambda_{\mc{O}}))\\
&\cong Ind_T^G(0) + \dots + (-1)^{l(w_0)}Ind_T^G(2\lambda_{\mc{O}}).
\end{align*}
Consequently, the maximal term in the above expression is $2\lambda_{\mc{O}} = {}^Lh$.
\end{proof}

One may ask a more refined question than Conjecture \ref{conj:AS}: Does the maximal term $2\lambda_{\mc{O}}$ appear in the expression of $X_{\mc{O}}^+$ or  $X_{\mc{O}}^-$? The observation made by Achar and Sommers in Remark 3.2 of \cite{AS} is that, if $\mc{O} = (2^21^{4r})$ (i.e. when $p=1, q=2r$), then $2\lambda_{\mc{O}}$ will appear in $X_{\mc{O}}^-$ rather than $X_{\mc{O}}^+$. We will extend their observation by using Equation \eqref{eq:ro}:
\begin{proposition} \label{prop:voganmap1}
Let $\mc{O} = (2^{2p}1^{2q})$ in $Sp(2n,\bb{C})$. Then the maximal term $2\lambda_{\mc{O}}$ appears in $X_{\mc{O}}^+$ iff $q$ is odd. As a consequence, the image of the Vogan map
$$\gamma(\mc{O}, \mathrm{triv}) = 2\lambda_{\mc{O}}\  \Leftrightarrow\ q \equiv 1(\mathrm{mod}\ 2).$$
\end{proposition}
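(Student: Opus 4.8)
\emph{Overview of the approach.}
I would extract the multiplicity of $Ind_T^G(2\lambda_{\mc{O}})$ in $X_{\mc{O}}^+$ straight from the formulas \eqref{eq:Re}, \eqref{eq:Rs}. Recall from the Example that $X_{\mc{O}}^+=\tfrac12(R_e+R_s)$; using $X(\lambda_1,\lambda_2)\cong Ind_T^G(\lambda_1-\lambda_2)$ as $K_{\bb{C}}$-modules, $R_e$ becomes $\sum_{w\in W'}(-1)^{l(w)}Ind_T^G(\lambda_{\mc{O}}-w\lambda_{\mc{O}})$ with $W'=W(D_p\times C_{p+q})$, and $R_s$ becomes $\sum_{w'\in W''}(-1)^{l(w')}Ind_T^G(\lambda'-w'\lambda')$ with $W''=W(D_{p+q+1}\times C_{p-1})$, where $\lambda'\sim\lambda_{\mc{O}}$ is the numerator in \eqref{eq:Rs}. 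Let $a$, $b$ and $m^+=\tfrac12(a+b)$ be the coefficients of $Ind_T^G(2\lambda_{\mc{O}})$ in $R_e$, $R_s$ and $X_{\mc{O}}^+$. Since $\lambda_{\mc{O}}$ (resp.\ $\lambda'$) is strictly dominant for $W'$ (resp.\ $W''$), the same reasoning as in the proof of Theorem \ref{thm:asproof} shows that $2\lambda_{\mc{O}}$ is the maximal weight occurring among the terms of $R_e$ and of $R_s$, hence among those of $X_{\mc{O}}^+$; consequently $\gamma(\mc{O},\mathrm{triv})=2\lambda_{\mc{O}}$ exactly when $m^+\neq0$, and the Proposition reduces to $m^+\neq0\iff q$ odd.

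\emph{Isolating the contributing Weyl elements.}
Here $a=\sum(-1)^{l(w)}$ over $w\in W'$ with $\lambda_{\mc{O}}-w\lambda_{\mc{O}}\sim2\lambda_{\mc{O}}$, and likewise for $b$. I would prove that $\lambda_{\mc{O}}-w\lambda_{\mc{O}}\sim2\lambda_{\mc{O}}$ forces $w\lambda_{\mc{O}}=-\lambda_{\mc{O}}$, with the unique such $w$ being the longest element $w_0'=(w_0^{D_p},w_0^{C_{p+q}})$ of $W'$. This is a peeling argument: writing $\lambda_{\mc{O}}=(p-1,\dots,1,0\,;\,p+q,\dots,2,1)$, the $D_p$- and $C_{p+q}$-blocks of $\lambda_{\mc{O}}-w\lambda_{\mc{O}}$ do not mix, and the $q+1$ ``large'' parts $2p,2p+2,\dots,2p+2q$ of $2\lambda_{\mc{O}}$ exceed $2(p-1)$, so they must all be produced by the $C_{p+q}$-block, which---its entries being distinct---forces $w$ to negate the corresponding coordinates; the leftover multiset $\{0,2^2,4^2,\dots,(2p-2)^2\}$ must then be split between the $D_p$-block and the remaining $p-1$ coordinates of the $C_{p+q}$-block, and since each of $2,4,\dots,2p-2$ must occur twice while each block can supply any given value at most once, both blocks are forced to negate their remaining entries. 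Applying the identical argument to \eqref{eq:Rs}, with the roles of the $D$- and $C$-blocks interchanged, the only contributor to $b$ is $w_0''=(w_0^{D_{p+q+1}},w_0^{C_{p-1}})$. Hence $a=(-1)^{l(w_0')}$, $b=(-1)^{l(w_0'')}$.

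\emph{The sign count.}
Since $|\Delta^+(D_m)|=m(m-1)$ is even and $|\Delta^+(C_m)|=m^2\equiv m\pmod2$, we get $(-1)^{l(w_0^{D_m})}=1$ and $(-1)^{l(w_0^{C_m})}=(-1)^m$; as $w_0'$, $w_0''$ are products of sign changes with trivial permutation part, this parity is independent of whether $l$ is measured in $W'$, $W''$ or in the ambient $W$, so the convention in \eqref{eq:Re}--\eqref{eq:Rs} plays no role. Therefore
\[
a=(-1)^{l(w_0^{D_p})+l(w_0^{C_{p+q}})}=(-1)^{p+q},\qquad b=(-1)^{l(w_0^{D_{p+q+1}})+l(w_0^{C_{p-1}})}=(-1)^{p-1},
\]
so $m^+=\tfrac12\big((-1)^{p+q}+(-1)^{p-1}\big)$, which equals $(-1)^{p-1}$ when $q$ is odd and $0$ when $q$ is even. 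With the first paragraph this gives $\gamma(\mc{O},\mathrm{triv})=2\lambda_{\mc{O}}\iff q\equiv1\pmod2$, and for $p=1$ recovers the Achar--Sommers observation that $2\lambda_{\mc{O}}$ then sits in $X_{\mc{O}}^-$.

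\emph{Expected difficulty.}
The crux is the peeling step: one must genuinely exclude every ``accidental'' $w$ with $w\lambda_{\mc{O}}\neq-\lambda_{\mc{O}}$ whose difference is still $W$-conjugate to $2\lambda_{\mc{O}}$, i.e.\ check the leftover multiset admits no alternative distribution between the two blocks. Once that combinatorial lemma is secured the remainder is bookkeeping. A little care is also needed in the degenerate cases (e.g.\ $p=1$, where $W(D_1)$ and $C_0$ are trivial), and one should assume $p\geq1$ so that $\mc{O}$ is not the point orbit.
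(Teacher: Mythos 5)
Your proposal is correct and is essentially the paper's own argument: the paper likewise reduces the statement to the observation that the weight $2\lambda_{\mc{O}}$ arises in $R_e$ and $R_s$ only from the longest elements of $W(D_p \times C_{p+q})$ and $W(D_{p+q+1} \times C_{p-1})$, and then compares parities via $l(w_0^{C_m}) \equiv m$ and $l(w_0^{D_m}) \equiv 0 \ (\mathrm{mod}\ 2)$ to obtain the condition $q$ odd. The only caveat concerns your peeling lemma (a step the paper merely asserts by reference to Theorem \ref{thm:asproof}): the sub-claim that each block can supply a given value at most once is not literally true (e.g.\ $3+1 = 2+2$ inside the $D$-block), so the leftover multiset should instead be peeled from its largest value downward, or handled by the sum bound $\sum_i |\lambda_i - (w\lambda)_i| \leq 2\sum_i \lambda_i$ with equality forcing all signs negative, after which the forced conclusion $w\lambda_{\mc{O}} = -\lambda_{\mc{O}}$ and the ensuing parity count are exactly as in the paper.
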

\begin{proof}
As in the proof of Theorem \ref{thm:asproof}, the maximal term $2\lambda_{\mc{O}}$ shows up in Equation \eqref{eq:ro} only when
$$w_0 \in D_p \times C_{p+q},\ \ \ \ \ w_0' \in D_{p+q+1} \times C_{p-1}$$
are both maximal length elements in their respective Weyl groups. Therefore, if $l(w_0)$ and $l(w_0')$ are of the same parity, then the maximal term $2\lambda_{\mc{O}}$ will show up in Equation \eqref{eq:ro}, and vice versa.\\

In Type $C_n$, the parity of the maximal length element is equal to the parity of $n$, while it is always even for $D_n$. Hence,
$$l(w_0) \equiv l(w_0') (\mathrm{mod}\ 2)\  \Leftrightarrow\  p+q \equiv p-1 (\mathrm{mod}\ 2)\ \Leftrightarrow q \equiv 1 (\mathrm{mod}\ 2).$$
And the Proposition follows.
\end{proof}

We can also ask what is the maximal element $\gamma(\mc{O},\mathrm{triv})$ if $q = 2r$ is even. In this case, $\mc{O} = (2^{2p}1^{4r})$ and
$$2\lambda_{\mc{O}} = (2p+4r, 2p+4r-2, \dots, 2p+2, 2p, (2p-2)^2, \dots, 4^2, 2^2, 0).$$

\begin{proposition} \label{prop:voganmap2}
Let $\mc{O} = (2^{2p}1^{4r})$ be a nilpotent orbit in $Sp(2n,\bb{C})$. Then
$$\gamma(\mc{O},\mathrm{triv}) = (2p+4r, 2p+4r-2, \dots, 2p+2, (2p-1)^2, (2p-3)^2, \dots, 3^2, 1^2).$$
Note that $\gamma(\mc{O},\mathrm{triv})$ is of smaller length than $2\lambda_{\mc{O}}$.
\end{proposition}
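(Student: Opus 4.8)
The plan is to extract $\gamma(\mc{O},\mathrm{triv})$ directly from the global character formula \eqref{eq:ro}. Write $R(\mc{O})\cong\frac12 S_1+\frac12 S_2$, where $S_1=\sum_{w\in W(D_p\times C_{p+q})}(-1)^{l(w)}Ind_T^G(\nu_1-w\nu_1)$ with $\nu_1=(p-1,\dots,0;\,p+q,\dots,1)$, and $S_2$ is the analogous sum over $W(D_{p+q+1}\times C_{p-1})$ with $\nu_2=(p+q,\dots,0;\,p-1,\dots,1)$. By Proposition \ref{prop:voganmap1} (the case $q$ even), the coefficient of $Ind_T^G(2\lambda_{\mc{O}})$ in this expression vanishes, so $\gamma(\mc{O},\mathrm{triv})$ is the largest dominant weight $\ne2\lambda_{\mc{O}}$ that survives in $\frac12 S_1+\frac12 S_2$.

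First I would establish the coarse bounds that confine the competitors. Since $\nu_1,\nu_2$ have nonnegative, strictly decreasing coordinates within each Weyl-factor block, for any $w$ the block-wise sum $\sum_j|(\nu_i-w\nu_i)_j|$ is at most twice the coordinate sum of that block, with equality only when the $C$-factor component of $w$ negates every coordinate of the block; as $|\nu_1|=|\nu_2|=|\lambda_{\mc{O}}|$, this forces every $Ind_T^G(\mu)$ occurring to satisfy $|\mu|\le|2\lambda_{\mc{O}}|$, and similarly $\mu_1\le2(p+q)$, $\mu_2\le2(p+q-1)$, and so on down the coordinates — exactly the ``staircase ceiling'' $2p+4r,2p+4r-2,\dots$ exhibited by $\gamma$. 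Thus every $\mu$ with $\mu\not\trianglelefteq\gamma$ that could occur lies in an explicit finite list: weights whose leading part agrees with the staircase of $2\lambda_{\mc{O}}$ for too many steps, among them $2\lambda_{\mc{O}}$ itself and its ``tail perturbations''.

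To see that $Ind_T^G(\gamma)$ does occur, I would exhibit the Weyl element(s) producing it: negate the $q$ largest coordinates of the appropriate block to build the even staircase $2p+4r,\dots,2p+2$, and on the remaining $2p$ coordinates — which are distributed between a $D$-factor and a $C$-factor — apply products of ``transpose-and-negate'' involutions on consecutive pairs of values; the sorted result is precisely $(2p+4r,\dots,2p+2,(2p-1)^2,\dots,1^2)=\gamma$. Counting all such $w$ with signs and multiplicities — their number is even, the extra factor arising as in the proof of Proposition \ref{prop:voganmap1}, e.g. from the two length-one elements of a $D_2$ factor — the net coefficient of $Ind_T^G(\gamma)$ is $\pm1$; one also checks, via the parity identities $l(w_0^{C_m})\equiv m$, $l(w_0^{D_m})\equiv0\pmod 2$, that $\gamma$ arises in only one of $S_1,S_2$ (which of the two depends on the parity of $p$), so it is not cancelled.

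The main obstacle is showing that every $\mu\not\trianglelefteq\gamma$ on the finite list above has total coefficient $0$ in $\frac12 S_1+\frac12 S_2$. The expected mechanism is the one already visible for $2\lambda_{\mc{O}}$ in Proposition \ref{prop:voganmap1}: the contributions of the two sums cancel. I would try to make this precise by constructing a sign-reversing bijection between the $w\in W(D_p\times C_{p+q})$ and $w'\in W(D_{p+q+1}\times C_{p-1})$ producing a given bad $\mu$ — trading a transposition in one Weyl factor for one in the other while preserving the multiset of signed coordinates of $\nu_i-w\nu_i$ — and checking that it reverses sign using the same parity identities. An alternative, perhaps less delicate, route is to compare $\frac12(S_1+S_2)$ against the explicit $K$-type decomposition $R(\mc{O})\cong\bigoplus_{m_1\ge\cdots\ge m_{2p}\ge0}V_{(2m_1,\dots,2m_{2p},0^q)}$: since the change of basis $Ind_T^G(\mu)\mapsto\sum V_\nu$ is unitriangular for the dominance order, matching the two descriptions on the top size-range pins down all coefficients of weights of size $|2\lambda_{\mc{O}}|$, after which the smaller-size competitors can be ruled out one family at a time. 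Setting either argument up uniformly in $p$ and $q$, and in particular handling the genuinely different behaviour for $p$ odd versus even, is where the real labour of the proof will lie.
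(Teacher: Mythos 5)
Your proposal correctly identifies the two places where work is needed --- showing that $Ind_T^G(\gamma)$ survives with nonzero coefficient, and showing that nothing larger survives --- but it does not actually carry out either one, so there is a genuine gap. For the second (and harder) point you offer two candidate mechanisms, a sign-reversing bijection between the $W(D_p\times C_{p+q})$- and $W(D_{p+q+1}\times C_{p-1})$-sums and a unitriangular comparison with the known $K$-type decomposition, and then explicitly defer them (``where the real labour of the proof will lie''). That deferred step is precisely the content of the proposition; without it the argument establishes only that $\gamma$ is a plausible candidate. Moreover, your non-cancellation argument for $\gamma$ itself rests on the claim that $\gamma$ arises in only one of $S_1,S_2$, and this is false: writing $\nu_1=(p-1,\dots,0;\,p+2r,\dots,1)$ and $\nu_2=(p+2r,\dots,0;\,p-1,\dots,1)$, one reaches $\gamma$ in $S_1$ by negating the top $2r$ coordinates of the $C_{p+2r}$ block and in $S_2$ by negating the top $2r$ coordinates of the $D_{p+2r+1}$ block, so $\gamma$ occurs in both sums. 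The actual reason it is not cancelled is that these two contributions have matching length parity (this uses $q=2r$ even, via $l(w_0^{C_m})\equiv m$, $l(w_0^{D_m})\equiv 0 \pmod 2$), so they reinforce rather than cancel --- the opposite bookkeeping from what you assert.

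The paper sidesteps the heavy combinatorics you postpone by a reduction you do not use: Lemma \ref{lem:voganlemma} settles the case $r=0$, where $\mc{O}=(2^{2p})$ is strongly Richardson, by comparing Equation \eqref{eq:ro} with an $A_{2p-1}$-type alternating sum (the identity going back to Section 11 of \cite{BV 1985}) and quoting the computation of \cite{CO}; there the maximal term $((2p-1)^2,\dots,1^2)$ is attained only at $w_0\in W(A_{2p-1})$, so its coefficient is visibly nonzero. For general $r$ one then embeds $W(D_p\times C_p)\hookrightarrow W(D_p\times C_{p+2r})$ and $W(D_{p+1}\times C_{p-1})\hookrightarrow W(D_{p+2r+1}\times C_{p-1})$, prepends the explicit elements $x,x'$ negating the top $2r$ coordinates (again with matching parity), and concludes. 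If you want to salvage your direct approach, you would need to either carry out the sign-reversing bijection for all competitors $\mu$ dominating $\gamma$, or make the unitriangularity comparison with $R(\mc{O})\cong\bigoplus V_{(2m_1,\dots,2m_{2p},0^q)}$ precise; as written, neither is done, and the one concrete non-cancellation claim you do make is incorrect.
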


\begin{lemma} \label{lem:voganlemma}
Proposition \ref{prop:voganmap2} holds when $r = 0$, i.e.  $\mc{O} = (2^{2p})$.
\end{lemma}
\begin{proof}
The orbit $\mc{O} = (2^{2p})$ is induced from the trivial orbit in the Levi subgroup $GL(n,\bb{C})$. This orbit is called \textit{strongly Richardson} in \cite{CO}, and $\gamma(\mc{O},\mathrm{triv})$ is known and computed there. Comparing the results in \cite{CO} and our formula of $X_{\mc{O}}^+$:
$$X_{\mc{O}}^+ \cong  \frac{1}{2}\sum_{w \in W(D_p \times C_{p})}(-1)^{l(w)} Ind_T^G[
(p-1, \dots, 1, 0; p, \dots, 2,1) - w(p-1, \dots, 1, 0; p, \dots, 2,1)]$$
$$+ \frac{1}{2} \sum_{w' \in W(D_{p+1} \times C_{p-1})}(-1)^{l(w')} Ind_T^G[
(p, \dots, 1, 0;  p-1, \dots, 2,1) - w'(p, \dots, 1, 0;  p-1, \dots, 2,1)]
$$
is equal to
\footnotesize{
$$\sum_{w \in W(A_{2p-1})}(-1)^{l(w)} Ind_T^G[
(\frac{2p-1}{2}, \dots, \frac{1}{2}, \frac{-1}{2}, \dots, \frac{-(2p-1)}{2}) - w(\frac{2p-1}{2}, \dots, \frac{1}{2}, \frac{-1}{2}, \dots, \frac{-(2p-1)}{2})].$$
}
\normalsize{The maximal norm is obtained by taking $w = w_0 \in W(A_{2p-1})$ in the above formula, which gives
$$\gamma(\mc{O},\mathrm{triv}) = ((2p-1)^2, (2p-3)^2, \dots, 3^2, 1^2)$$
as in our Proposition.\\
(Indeed, a similar equality was first noticed by Barbasch and Vogan in Section 11 of \cite{BV 1985})}.
\end{proof}

\noindent \textit{Proof of Proposition \ref{prop:voganmap2}.} By Equation \eqref{eq:ro},
\begin{align*}
R(\mc{O}) \cong \frac{1}{2} \sum_{w \in W(D_p \times C_{p+2r})}& (-1)^{l(w)} X\left(
\begin{array}{cccccccccc}
&p-1, \dots, 1, 0; & p+2r, \dots, 2,1 \\
w(&p-1, \dots, 1, 0; & p+2r, \dots, 2,1)\end{array}
\right)+\\
\frac{1}{2} \sum_{w' \in W(D_{p+2r+1} \times C_{p-1})} & (-1)^{l(w')} X\left(
\begin{array}{cccccccccc}
&p+2r, \dots, 1, 0; & p-1, \dots, 2,1 \\
w'(&p+2r, \dots, 1, 0; & p-1, \dots, 2,1)\end{array}
\right).  \nonumber
\end{align*}
We want to make the first $2r$ coordinates as large as possible: Apply
$$x: (p+2r,\dots,p+1,p,\dots,1) \mapsto (-(p+2r),\dots,-(p+1),p,\dots,1)$$
in $C_{p+2r}$ for the first expression, and
$$x': (p+2r,\dots,p+1,p,\dots,1,0) \mapsto (-(p+2r),\dots,-(p+1),p,\dots,1,0)$$
in $D_{p+2r+1}$ for the second expression. It can be checked that $l(x)$ and $l(x')$ has the same parity (this relies on the fact that $q=2r$ is even). So the first $2r$ coordinates of $\gamma(\mc{O},\mathrm{triv})$ must be of the form
$$(p+2r,\dots,p+1) - (-(p+2r),\dots,-(p+1)) = (2p+4r, 2p+4r-2, \dots, 2p+2)$$
as in the Proposition.\\

For the last $4p$ coordinates, note that
$$W(D_p \times C_{p}) \leq W(D_p \times C_{p+2r}), \ \ \ \ \  W(D_{p+1} \times C_{p-1}) \leq W(D_{p+2r+1} \times C_{p-1}).$$
By Lemma \ref{lem:voganlemma}, there exists a $y \in W(D_p \times C_{p})$ and a $y' \in W(D_{p+1} \times C_{p-1})$ such that $l(y) \equiv l(y')\ (\mathrm{mod} 2)$, and the maximal norm in the Lemma is attained. Embed
$$y \in W(D_p \times C_{p}) \hookrightarrow W(D_p \times C_{p+2r}), \ \ \ \ \ y' \in W(D_{p+1} \times C_{p-1}) \hookrightarrow  W(D_{p+2r+1} \times C_{p-1}).$$
Then the last $4p$ coordinates of $\gamma(\mc{O},\mathrm{triv})$ are of the form
$$((2p-1)^2, (2p-3)^2, \dots, 3^2, 1^2)$$
as in the Proposition.\\

Combining the above calculations, if we take
$$z:=(1,x)\cdot y \in W(D_p \times C_{p+2r}), \ \ \ \ \ z'=(x',1)\cdot y' \in W(D_{p+2r+1} \times C_{p-1}),$$
then $l(z) \equiv l(z')\ (\mathrm{mod} 2)$, and the term
$$Ind_T^G(2p+4r, 2p+4r-2, \dots, 2p+2, (2p-1)^2, (2p-3)^2, \dots, 3^2, 1^2)$$
must have non-zero coefficient in Equation \eqref{eq:ro}, and this must be the maximum term we can obtain in the expression.
\qed

\section{Final Remarks}
As mentioned in the Introduction, the preprint \cite{B 2008} deals with a bigger class of nilpotent orbits. Assuming the results there are valid, we get:
\begin{theorem}\label{thm:b2008}
Let $\mc{O}$ be a classical, special nilpotent orbit with $A(\mc{O}) = \overline{A}(\mc{O})$ and ${}^L\mc{O}$ being even. Then Conjecture \ref{conj:AS} holds for its Spaltenstein dual ${}^L\mc{O}$.
\end{theorem}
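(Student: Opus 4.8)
The plan is to mimic the proof of Theorem \ref{thm:asproof} almost verbatim, replacing each ingredient that was special to $\mc{O}=(2^{2p}1^{2q})$ by the corresponding statement from \cite{B 2008}. First I would recall, for a general classical special $\mc{O}$ with $A(\mc{O})=\overline{A}(\mc{O})$ and ${}^L\mc{O}$ even, the output of Algorithm \ref{alg:specialunipotent}: the quantity $\lambda_{\mc{O}}=\frac{1}{2}\,{}^Lh$, the cell $V^L(w_0w_{\mc{O}})$, the operators $R_x$ for $x\in\overline{A}(\mc{O})$, and the special unipotent representations $X_\pi$ indexed by $\pi\in\overline{A}(\mc{O})^\wedge$. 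Since ${}^L\mc{O}$ is special, the canonical Sommers preimage is $(\mc{O},1)$; since $A(\mc{O})=\overline{A}(\mc{O})$, the map $\pi\colon A(\mc{O})\to\overline{A}(\mc{O})$ is the identity, so $K_C$ is trivial, $G_C=(G^e)_0$, and $\mc{O}_C\cong\tilde{\mc{O}}$, the universal cover. Thus the conjecture reduces to showing that the maximal $Ind_T^G(\lambda)$ appearing in $R(\tilde{\mc{O}})$ has $\lambda={}^Lh=2\lambda_{\mc{O}}$.

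The heart of the matter is then to invoke the hypothesis of Theorem 2.0.6 of \cite{B 2008}: it provides an isomorphism of $K_{\bb{C}}$-modules $R(\tilde{\mc{O}})\cong\bigoplus_{\pi\in\overline{A}(\mc{O})^\wedge}X_\pi$, i.e. the direct sum of all special unipotent representations attached to $\mc{O}$ realizes the ring of functions on the universal cover. (This is precisely the generalization of Corollary \ref{cor:rounipotent}(a), where the sum $X_{\mc{O}}^+\oplus X_{\mc{O}}^- = R_e$ collapsed to a single standard-module character.) Summing the character formulas in Step (4) of Algorithm \ref{alg:specialunipotent} over all $\pi$, the orthogonality of characters of $\overline{A}(\mc{O})$ kills every $R_x$ with $x\neq 1$, leaving
\begin{equation*}
R(\tilde{\mc{O}})\;\cong\;R_1\;=\;\sum_{w'\in W'}(-1)^{l(w')}\,X\!\left(\begin{array}{c}\lambda_{\mc{O}}\\ w'\lambda_{\mc{O}}\end{array}\right),
\end{equation*}
where $\sigma_1=j^W_{W'}(\mathrm{sgn})$ is the Springer representation of $\mc{O}$ itself (the identity element always corresponds to the Springer representation of $\mc{O}$). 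Passing to $K_{\bb{C}}$-types via $X\!\left(\begin{smallmatrix}\lambda_1\\ \lambda_2\end{smallmatrix}\right)\cong Ind_T^G(\lambda_1-\lambda_2)$ gives $R(\tilde{\mc{O}})\cong\sum_{w'\in W'}(-1)^{l(w')}Ind_T^G(\lambda_{\mc{O}}-w'\lambda_{\mc{O}})$.

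It then remains to identify the maximal $\lambda$ in this alternating sum. Here I would argue that the dominant representative of $\lambda_{\mc{O}}-w'\lambda_{\mc{O}}$ is largest, in the standard dominance order on $\Lambda^+$, precisely when $w'$ is the longest element $w'_0$ of $W'$, giving $\lambda_{\mc{O}}-w'_0\lambda_{\mc{O}}$; and because $\lambda_{\mc{O}}=\frac12{}^Lh$ is dominant, $w'_0$ sends it to $-\lambda_{\mc{O}}$ on the factors of $W'$ acting nontrivially. Since ${}^L\mc{O}$ is even, ${}^Lh$ is already dominant with all entries in the relevant coordinates, and $2\lambda_{\mc{O}}={}^Lh$ survives with nonzero (in fact $\pm1$) coefficient because no shorter Weyl element produces the same weight — exactly as in the last display of the proof of Theorem \ref{thm:asproof}. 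The main obstacle is the last point: verifying that $2\lambda_{\mc{O}}$ is genuinely the maximal weight and that cancellation in the alternating sum does not annihilate its coefficient. In the case $\mc{O}=(2^{2p}1^{2q})$ this was transparent because $W'=W(D_p\times C_{p+q})$ and one could read off $w_0$ explicitly; in general one needs that the $j$-induction data $W'=j^{-1}(\sigma_1)$ coming from \cite{S1}, \cite{Lu1} always has the property that $w'_0\lambda_{\mc{O}}$ lies in a different $W$-orbit stratum from $w''\lambda_{\mc{O}}$ for every $w''$ with $l(w'')<l(w'_0)$ and the same dominant representative — equivalently, that $2\lambda_{\mc{O}}$ occurs with odd multiplicity in the multiset $\{\lambda_{\mc{O}}-w'\lambda_{\mc{O}}: w'\in W',\ l(w')\equiv l(w'_0)\}$ minus those with $l(w')\not\equiv l(w'_0)$. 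This parity/cancellation bookkeeping, which in \cite{AS} is built into their combinatorial normalization of ${}^Lh$ for classical even orbits, is the step I expect to require the most care and is where I would lean hardest on the explicit classical tableaux description of ${}^L\mc{O}$ and ${}^Lh$.
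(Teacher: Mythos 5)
Your proposal follows the paper's own (sketched) argument essentially verbatim: reduce to $\mc{O}_C\cong\tilde{\mc{O}}_{univ}$ via $A(\mc{O})=\overline{A}(\mc{O})$, invoke Theorem 2.0.6 of \cite{B 2008} to identify $R(\tilde{\mc{O}}_{univ})$ with the sum of all special unipotent representations attached to $\mc{O}$, note that this sum collapses to $R_e=\sum_{w'\in W'}(-1)^{l(w')}X\!\left(\begin{smallmatrix}\lambda_{\mc{O}}\\ w'\lambda_{\mc{O}}\end{smallmatrix}\right)$ with $\sigma_e=j^W_{W'}(\mathrm{sgn})$, and extract the maximal term $2\lambda_{\mc{O}}={}^Lh$ from the longest element of $W'$. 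The maximality/cancellation point you flag at the end is exactly the step the paper itself dispatches only by appeal to ``a similar argument as in Theorem \ref{thm:asproof}'', so your write-up is at the same level of detail as the paper's sketch.
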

\noindent \textit{Sketch of Proof.}
To prove Conjecture \ref{conj:AS}, we need to compute $R(\mc{O}_C)$. Under our hypothesis, the canonical preimage of ${}^L\mc{O}$ is $(\mc{O},1)$, and $R(\mc{O}_C) = R(\tilde{\mc{O}}_{univ})$ with $\tilde{\mc{O}}_{univ}$ being the universal cover of $\mc{O}$. Now, Theorem 2.0.6 of \cite{B 2008} says
\begin{align} \label{eq:sumuni}
R(\tilde{\mc{O}}_{univ}) \cong \bigoplus_{X_{\mc{O}}^i \in \mf{X}(\lambda_{\mc{O}})} X_{\mc{O}}^i,
\end{align}
where $\mf{X}(\lambda_{\mc{O}})$ is the collection of all unipotent representations having infinitesimal character $(\lambda_{\mc{O}}, \lambda_{\mc{O}})$ $\in$ $\mf{h}^* \times \mf{h}^*$. The value of $\lambda_{\mc{O}} \in \mf{h}^*$ is stated in Theorem 2.0.6, and is equal to $\frac{1}{2}{}^Lh$. Using Algorithm \ref{alg:specialunipotent}, and a similar argument as in Theorem \ref{thm:asproof}, one can show that the maximal term on the right hand side of Equation \eqref{eq:sumuni} is $Ind_T^G(\lambda_{\mc{O}} - (-\lambda_{\mc{O}})) = Ind_T^G({}^Lh)$, and the Theorem is proved.
\qed
\begin{example} {\rm
Let $\mc{O} = (4,4,3,3,2,2,1,1)$ in $G = Sp(20,\bb{C})$. Then $A(\mc{O}) = \overline{A}(\mc{O}) = \bb{Z}/2\bb{Z} \times \bb{Z}/2\bb{Z}$ and the Spaltenstein dual of $\mc{O}$ is ${}^L\mc{O} = (9,5,5,1,1)$. According to \cite{B 2008} and Step (1) of Algorithm \ref{alg:specialunipotent},
$$\frac{1}{2}{}^Lh = \lambda_{\mc{O}} = (4,3,2^3,1^3,0^2).$$
According to the above Theorem, $R(\mc{O}_C) = R(\tilde{\mc{O}}_{univ}) \cong \bigoplus_{X_{\mc{O}}^i \in \mf{X}(\lambda_{\mc{O}})} X_{\mc{O}}^i$
is the sum of all special unipotent representations attached to $\mc{O}$. By Step (4) of Algorithm \ref{alg:specialunipotent},
$$\bigoplus_{X_{\mc{O}}^i \in \mf{X}(\lambda_{\mc{O}})} X_{\mc{O}}^i \cong R_e,$$
where $R_e$ is given by the formula in Step (3).\\

Using Section 7 of \cite{S1}, $\sigma_e = j_{C_4 \times D_3 \times C_2 \times D_1}^{W}(\mathrm{sgn})$ and hence
\begin{eqnarray}
R(\tilde{\mc{O}}_{univ}) \cong R_e \cong \sum_{w \in C_4 \times D_3 \times C_2 \times D_1}(-1)^{l(w)} X\left(
\begin{array}{cccccccccc}
&4321; & 210; & 21; & 0 \\
w(&4321; & 210; & 21; & 0)\end{array}
\right).
\end{eqnarray}
Using the same arguments as in previous sections, we can take $w = w_0$ for the longest element, and obtain the maximal term $\lambda_{\mc{O}} - (-\lambda_{\mc{O}}) = {}^Lh$.}
\end{example}
Finally, along the lines of Proposition 4.3-4.4, we can compute which $X_{\mc{O}}^i$ contains the term $Ind_T^G(2\lambda_{\mc{O}})$. Or more precisely, we can compute the maximal element appearing in each of $X_{\mc{O}}^i$. We omit the details here.

\section*{acknowledgements} The author would like to thank his Ph.D. advisor Dan Barbasch for introducing unipotent representations to the author. He would also like to thank the referees for several helpful suggestions of the work.

\end{document}